\newcommand{\ab}[1]{\boldsymbol{#1}}
\def\bfm#1{\boldsymbol{#1}}
\newcommand{\f}[1]{\mathbf{#1}}
\newcommand{\g}{f}
\newcommand{\gC}{f}
\newcommand{\AS}{A}
\newcommand{\bb}[1]{\bfm{#1}}
\newcommand{\N}{\mathbb N}
\newcommand{\R}{\mathbb R}
\newcommand{\sm}{s}
\newcommand{\A}{a}
\newcommand{\B}{b}
\newcommand{\Coeff}{B}
\newcommand{\coe}{d}
\newcommand{\LL}{i_0}
\newcommand{\RR}{i_1}
\newcommand{\Side}{\tau}
\newcommand{\dd}{\partial}
\newcommand{\ff}{f^{\Xi^{(i)}}}
\DeclareMathOperator{\Span}{span}
\begin{document}

\title{$C^{\sm}$-smooth isogeometric spline spaces over planar multi-patch parameterizations%\thanks{Grants or other notes
%about the article that should go on the front page should be
%placed here. General acknowledgments should be placed at the end of the article.}
}

%\titlerunning{Short form of title}        % if too long for running head

\author{Mario Kapl \and Vito Vitrih}

%\authorrunning{Short form of author list} % if too long for running head

\institute{M. Kapl \at
           Johann Radon Institute for Computational and Applied Mathematics, Austrian Academy of Sciences\\
           Altenbergerstra\ss{}e 69, 4040 Linz, Austria \\
           \email{mario.kapl@ricam.oeaw.ac.at}
         \and
           V. Vitrih \at
           IAM and FAMNIT, University of Primorska\\ 
           Muzejski trg 2, 6000 Koper, Slovenia \\
           \email{vito.vitrih@upr.si}
}

\date{Received: date / Accepted: date}
% The correct dates will be entered by the editor

\maketitle

\begin{abstract}
The design of globally $C^{\sm}$-smooth ($\sm \geq 1$) isogeometric spline spaces over multi-patch geometries is a current and challenging topic 
of research in the framework of isogeometric analysis. In this work, we extend the recent methods~\cite{KaSaTa17a,KaSaTa19a} and 
\cite{KaVi17c,KaVi19a,KaVi20} for the construction of $C^1$-smooth and $C^2$-smooth isogeometric spline spaces over particular planar 
multi-patch geometries to the case of $C^{\sm}$-smooth isogeometric multi-patch spline spaces of an arbitrary selected smoothness 
$\sm \geq 1$. More precisely, for any~$\sm \geq 1$, we study the space of $C^{\sm}$-smooth isogeometric spline functions defined on planar, 
bilinearly parameterized multi-patch domains, and generate a particular $C^{\sm}$-smooth subspace of the entire $C^{\sm}$-smooth 
isogeometric multi-patch spline space. We further present the construction of a basis for this $C^{\sm}$-smooth  subspace, 
which consists of simple and locally supported functions. Moreover, we use the $C^{\sm}$-smooth spline functions to perform $L^2$ 
approximation on bilinearly parameterized multi-patch domains, where the obtained numerical results indicate an optimal approximation 
power of the constructed $C^{\sm}$-smooth subspace.
\keywords{isogeometric analysis \and geometric continuity \and multi-patch domain \and bilinear-like \and $C^s$-smooth functions}
\subclass{65D07, 65D17, 65N30}
\end{abstract}

%%%%%%%%%%%%%%%%%%%%%%%%%%%%%%%%%%%%%%%%%%%%%%%%%%%%%%%%%
\section{Introduction} \label{sec:introduction}
%%%%%%%%%%%%%%%%%%%%%%%%%%%%%%%%%%%%%%%%%%%%%%%%%%%%%%%%%

Multi-patch spline geometries with possibly extraordinary vertices, i.e. vertices with valencies different to four, are a useful tool in 
Computer Aided Design~\cite{Fa97,HoLa93} for modeling complex objects, which usually 
cannot be described just by single-patch geometries. The concept of isogeometric analysis~\cite{ANU:9260759,CottrellBook,HuCoBa04} allows the 
construction of globally $C^{\sm}$-smooth ($\sm \geq 1$) isogeometric spline spaces over these multi-patch geometries. The smooth spline spaces can 
then be used to solve high order partial differential equations (PDEs) on the multi-patch domains directly via the weak form and a 
standard Galerkin discretization. While in case of fourth order PDEs such as the biharmonic equation, 
e.g.~\cite{BaDe15,CoSaTa16,KaBuBeJu16,NgKaPe15,TaDe14}, the Kirchhoff-Love shell problem, e.g. 
\cite{ABBLRS-stream,benson2011large,kiendl-bazilevs-hsu-wuechner-bletzinger-10,kiendl-bletzinger-linhard-09,KiHsWuRe15}, 
the Cahn-Hilliard equation, e.g. \cite{gomez2008isogeometric,GoCaHu09,LiDeEvBoHu13}, and problems of strain gradient elasticity, 
e.g.~\cite{gradientElast2011,MaReBeJu18,KhakaloNiiranenC1}, $C^1$-smooth isogeometric spline functions are needed,  
even $C^2$-smooth functions are required in case of sixth order PDEs such as the triharmonic equation, e.g. 
\cite{BaDe15,KaVi19a,TaDe14}, the phase-field crystal equation, e.g. \cite{BaDe15,Gomez2012}, the Kirchhoff plate model based 
on the Mindlin's gradient elasticity theory, e.g. \cite{KhNi17,Niiranen2016}, and the gradient-enhanced continuum damage model, 
e.g. \cite{GradientDamageModels}. Isogeometric collocation, see 
e.g.~\cite{SuperConvergent2015,IsoCollocMethods2010,GomezLorenzisVariationalCollocation,KaVi20,MonSanTam2017}, is another possible application of globally 
smooth isogeometric spline spaces. The solving of the strong form of the PDE requires now in case of a second order PDE $C^2$-smooth isogeometric spline 
functions and in case of a fourth order PDE already $C^4$-smooth functions.

The construction of globally $C^{\sm}$-smooth isogeometric spline spaces over multi-patch geometries is mainly based on the observation that an 
isogeometric function is $C^{\sm}$-smooth over the given multi-patch domain if and only if its associated multi-patch graph surface is 
$G^{\sm}$-smooth (i.e. geometrically continuous of order~$\sm$). In this work, we will focus on the design of smooth isogeometric spline spaces 
over planar multi-patch geometries. So far, most existing techniques are limited to a global smoothness of~$\sm=1$ and $\sm=2$. In case of 
$\sm=1$, these methods can be roughly classified into three approaches depending on the used multi-patch parameterization. While the first 
strategy employs a multi-patch parameterization, which is $C^1$-smooth everywhere and therefore possesses a singularity at the extraordinary 
vertices, see e.g. \cite{NgPe16,ToSpHu17}, the second approach uses a multi-patch parameterization, which is $C^1$-smooth everywhere except in the 
neighborhood of an extraordinary vertex, where a special construction of the parameterization is needed, see e.g 
\cite{Pe15-2,KaPe17,KaPe18,NgKaPe15}. In contrast to the first two approaches, where the multi-patch geometry is $C^1$-smooth at most parts of 
the multi-patch domain, the used multi-patch parameterization in the third stratey is in general just $C^0$-smooth at the interfaces. Examples 
of such parameterizations are (mapped) piecewise bilinear parameterizations, e.g.~\cite{BeMa14,KaBuBeJu16,KaViJu15}, 
general analysis-suitable parameterizations, e.g.~\cite{CoSaTa16,KaSaTa17a,KaSaTa17b,KaSaTa19a}, non-analysis-suitable parameterizations, 
e.g.~\cite{ChAnRa18,ChAnRa19}, and general quadrilateral meshes of arbitrary topology~\cite{BlMoVi17,BlMoXu20,mourrain2015geometrically}.
The recent survey article~\cite{KaSaTa19b} provides more details about the single methods of the three approaches and also includes further 
possible constructions.

In case of $\sm=2$, there exist only a small number of possible constructions, which mainly follow the third strategy for~$\sm=1$, 
see e.g.~\cite{KaVi17a,KaVi17b,KaVi17c,KaVi19a,KaVi20}. All these methods can be applied to the case of (mapped) bilinear multi-patch 
parameterizations, but the techniques~\cite{KaVi17c,KaVi19a,KaVi20} work also for a more general class of multi-patch parameterizations, 
called bilinear-like $G^2$ multi-patch geometries, cf.~\cite{KaVi17c}. The design of $C^{\sm}$-smooth isogeometric spline spaces for 
planar multi-patch geometries with possibly extraordinary vertices has not been considered so far for a global smoothness of 
$\sm > 2$, and is the topic of this paper. A related approach, which is based on a polar configuration and enables the construction of 
$C^{\sm}$-smooth isogeometric spline functions with a smoothness of~${\sm} \geq 3$, is the technique~\cite{ToSpHiHu16}.

In this paper, we study and generate $C^{\sm}$-smooth isogeometric spline functions of an arbitrary smoothness~$\sm \geq 1$, 
which are defined over planar, multi-patch parameterizations. The construction is mainly described for the case of bilinearly parameterized 
multi-patch domains, but can be enlarged to the wider class of bilinear-like $G^{\sm}$ multi-patch geometries, which has been already introduced 
for the case~$\sm=2$ in \cite{KaVi17c}, and which allows the modeling of planar multi-patch geometries with curved interfaces and boundaries. 
The presented study and construction of the globally $C^{\sm}$-smooth isogeometric spline functions can be seen as an extension of the 
techniques~\cite{KaSaTa17a,KaSaTa19a} and \cite{KaVi17c,KaVi19a,KaVi20} for the design of $C^{\sm}$-smooth isogeometric multi-patch spline spaces 
for the case of $\sm=1$ and $\sm=2$, respectively. More precisely, we develop for the case of a planar bilinear multi-patch parameterization a 
theoretical framework to study the $C^{\sm}$-smoothness condition of an isogeometric function and to characterize the resulting $C^{\sm}$-smooth 
function. We also use this framework to generate a particular $C^{\sm}$-smooth isogeometric spline space for a given planar, 
bilinearly parameterized multi-patch domain and to construct a simple and locally supported basis for the $C^s$-smooth space. 
Several numerical tests by performing $L^2$~approximation using the $C^{\sm}$-smooth isogeometric spline space for different~$\sm$ indicate 
an optimal approximation power of the constructed $C^{\sm}$-smooth space, and demonstrate the potential of the space
for the use in isogeometric analysis.

The remainder of this paper is organized as follows. In Section~\ref{sec:multipatch}, we introduce the particular class of planar multi-patch 
geometries, which consists of bilinearly parameterized quadrilateral patches, and will be used throughout the paper. Moreover, we present the 
concept of $C^{\sm}$-smooth isogeometric spline spaces over this class of multi-patch geometries. Section~\ref{sec:two-patch-case} studies the 
$C^{\sm}$-smoothness condition of an isogeometric function across two neighboring patches and describes first the construction of a particular 
$C^{\sm}$-smooth isogeometric spline space for the case of a bilinearly parameterized two-patch domain. In Section~\ref{sec:design_multipatches}, 
we then extend the particular construction to the case of bilinearly parametrized multi-patch domains with more than two patches and with possibly 
extraordinary vertices. For both cases, we also explain the design of a simple basis, which consists of locally supported functions. A first 
possible generalization of our approach beyond bilinear parameterizations is briefly discussed in Section~\ref{sec:bilinearlike}. Numerical 
experiments in Section~\ref{sec:examples} indicate optimal approximation properties of the presented $C^{\sm}$-smooth isogeometric multi-patch 
spline spaces. Finally, we conclude the paper in Section~\ref{sec:conclusion}.

%%%%%%%%%%%%%%%%%%%%%%%%%%%%%%%%%%%%%%%%%%%%%%%%%%%%%%%%%%%%%%%%%%%%%%%%%%%%%%%%%%%%%%%%%%%%%%%%%%%%%%%%
\section{The multi-patch setting and $C^{\sm}$-smooth isogeometric spline spaces} \label{sec:multipatch}
%%%%%%%%%%%%%%%%%%%%%%%%%%%%%%%%%%%%%%%%%%%%%%%%%%%%%%%%%%%%%%%%%%%%%%%%%%%%%%%%%%%%%%%%%%%%%%%%%%%%%%%%%

In this section, we will first describe the multi-patch setting, which will be used throughout the paper, and then, we will give a 
short overview of the concept of $C^{\sm}$-smooth ($\sm \geq 1$) isogeometric spline spaces over the considered class of multi-patch domains.

Let $\Omega$ and $\Omega^{(i)}$, $i \in \mathcal{I}_{\Omega}$, be open and connected domains in $\R^2$, such that 
$\overline{\Omega} = \cup_{i \in \mathcal{I}_{\Omega}} \overline{\Omega^{(i)}}$, where $\mathcal{I}_{\Omega}$ is the index set of the
indices of the patches~$\Omega^{(i)}$. Furthermore, let 
$\Omega^{(i)}$, $i \in \mathcal{I}_{\Omega}$, be quadrangular patches, which are mutually disjoint, and the closures of any two of them have either 
an empty intersection, possess exactly one common vertex or share the whole common edge. Additionally, the deletion of any vertex of the 
multi-patch domain~$\overline{\Omega}$ does not split $\overline{\Omega}$ into subdomains, whose union would be unconnected. We will further 
assume that each patch $\overline{\Omega^{(i)}}$ is parameterized by a bilinear, bijective and regular geometry mapping~$\ab{F}^{(i)}$,
\begin{align*}
 \ab{F}^{(i)}: [0,1]^{2}  \rightarrow \R^{2}, \quad 
 \bb{\xi} =(\xi_1,\xi_2) \mapsto
 \ab{F}^{(i)}(\bb{\xi}) = \ab{F}^{(i)}(\xi_1,\xi_2), \quad i \in \mathcal{I}_{\Omega},
\end{align*}
such that $\overline{\Omega^{(i)}} = \ab{F}^{(i)}([0,1]^{2})$, see Fig.~\ref{fig:geommetryToOmega}.
In addition, we denote by $\ab{F}$ the multi-patch parameterization consisting of all geometry mappings~$\ab{F}^{(i)}$, $i \in 
\mathcal{I}_{\Omega}$. We will also use the splitting of the multi-patch domain $\overline{\Omega}$ into the single patches~$\Omega^{(i)}$, 
$i \in \mathcal{I}_{\Omega}$, edges~$\Gamma^{(i)}$, $i \in \mathcal{I}_{\Gamma}$, and vertices~${\Xi}^{(i)}$, $i \in \mathcal{I}_{\Xi}$, i.e.,
\begin{equation*} 
\displaystyle
\overline{\Omega} = \bigcup_{i \in \mathcal{I}_{\Omega}} \Omega^{(i)}  \; \dot{\cup}  \bigcup_{i \in \mathcal{I}_{\Gamma}} \Gamma^{(i)} \; 
\dot{\cup} \bigcup_{i \in \mathcal{I}_{\Xi}} {\Xi}^{(i)},
\end{equation*}
where $\mathcal{I}_{\Gamma}$ and $\mathcal{I}_{\Xi}$ are the index sets of the indices of the edges~$\Gamma^{(i)}$ and vertices~${\Xi}^{(i)}$, 
respectively.

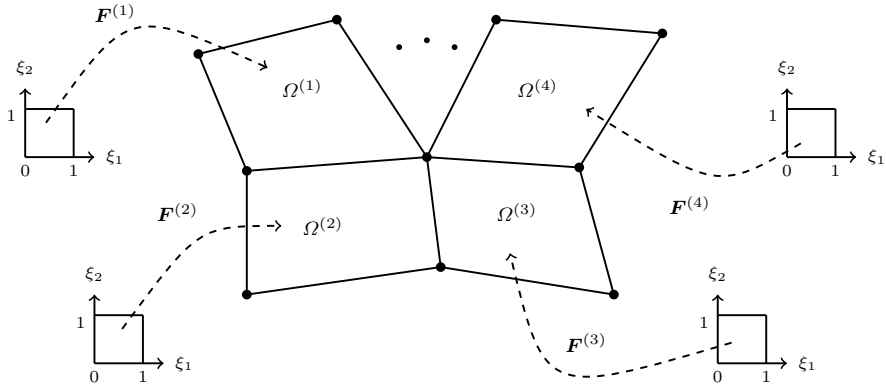
\begin{figure}[hbt]
\begin{center}
\resizebox{0.8\textwidth}{!}{
 \begin{tikzpicture}
  \coordinate(A) at (0.8,0);  \coordinate(B) at (3.,-0.15); \coordinate(C) at (4.2,1.8); \coordinate(D) at (-1.8,-0.2);
  \coordinate(E) at (-1.8,-2); \coordinate(F) at (1.0,-1.6); \coordinate(G) at (3.5,-2.); 
  \coordinate(H) at (1.8,2);
  \coordinate(I) at (-0.5,2);   \coordinate(J) at (-2.5,1.5);
  \coordinate(X) at (0.4,1.6);
  \coordinate(Y) at (1.2,1.6);
  \coordinate(Z) at (0.8,1.7);

  \draw[thick] (A) -- (B); \draw[thick] (A) -- (D);  \draw[thick] (A) -- (F); 
  \draw[thick] (B) -- (C);  \draw[thick] (D) -- (E); \draw[thick] (E) -- (F); \draw[thick] (F) -- (G); \draw[thick] (G) -- (B);
  \draw[thick] (C) -- (H); \draw[thick] (H) -- (A);
  \draw[thick] (D) -- (J);  \draw[thick] (J) -- (I); \draw[thick] (I) -- (A);

  \fill (A) circle (2.pt); \fill (B) circle (2pt); \fill (C) circle (2pt); \fill (D) circle (2pt);
  \fill (E) circle (2.pt); \fill (F) circle (2pt); \fill (G) circle (2pt);  \fill (H) circle (2pt); 
   \fill (I) circle (2.pt);    \fill (J) circle (2pt);
  \fill (X) circle (1.2pt);
  \fill (Y) circle (1.2pt);
  \fill (Z) circle (1.2pt);

  \draw[thick,->] (-4,-3) -- (-4.,-2);
  \draw[thick,->] (-4,-3) -- (-3.,-3);
  \draw[thick] (-4,-2.3) -- (-3.3,-2.3);
  \draw[thick] (-3.3,-3) -- (-3.3,-2.3);
  \node at (-2.7,-3) {\scriptsize $\xi_1$};
  \node at (-4,-1.7) {\scriptsize $\xi_2$};
   \node at (-3.3,-3.2) {\scriptsize $1$};
   \node at (-4.2,-2.4) {\scriptsize $1$};
  \node at (-4,-3.2) {\scriptsize $0$};
  
    \draw[thick,->] (-5,0) -- (-5.,1);
  \draw[thick,->] (-5,0) -- (-4.,0);
  \draw[thick] (-5,0.7) -- (-4.3,0.7);
  \draw[thick] (-4.3,0) -- (-4.3,0.7);
  \node at (-3.7,0) {\scriptsize $\xi_1$};
  \node at (-5,1.3) {\scriptsize $\xi_2$};
   \node at (-4.3,-0.2) {\scriptsize $1$};
   \node at (-5.2,0.6) {\scriptsize $1$};
  \node at (-5,-0.2) {\scriptsize $0$};
  
  \draw[thick,->] (5,-3) -- (5.,-2);
  \draw[thick,->] (5,-3) -- (6.,-3);
  \draw[thick] (5,-2.3) -- (5.7,-2.3);
  \draw[thick] (5.7,-3) -- (5.7,-2.3);
  \node at (6.3,-3) {\scriptsize $\xi_1$};
  \node at (5,-1.7) {\scriptsize $\xi_2$};
   \node at (5.7,-3.2) {\scriptsize $1$};
   \node at (4.8,-2.4) {\scriptsize $1$};
  \node at (5,-3.2) {\scriptsize $0$};
  
    \draw[thick,->] (6,0) -- (6.,1);
  \draw[thick,->] (6,0) -- (7.,0);
  \draw[thick] (6,0.7) -- (6.7,0.7);
  \draw[thick] (6.7,0) -- (6.7,0.7);
  \node at (7.3,0) {\scriptsize $\xi_1$};
  \node at (6,1.3) {\scriptsize $\xi_2$};
   \node at (6.7,-0.2) {\scriptsize $1$};
   \node at (5.8,0.6) {\scriptsize $1$};
  \node at (6,-0.2) {\scriptsize $0$};

  \node at (-1.,1.) {\small $\Omega^{(1)}$};
  \node at (-0.7,-1) {\small $\Omega^{(2)}$};
  \node at (2.1,-0.8) {\small $\Omega^{(3)}$};
  \node at (2.4,1.) {\small $\Omega^{(4)}$};
  
    \node at (-3.7,2.1) {\small $\bfm{F}^{(1)}$};
  \node at (-2.8,-0.8) {\small $\bfm{F}^{(2)}$};
  \node at (3.1,-2.7) {\small $\bfm{F}^{(3)}$};
  \node at (4.6,-0.7) {\small $\bfm{F}^{(4)}$};
  
  \draw[thick,dashed,->] (-4.7,0.5) .. controls(-3.5,2.2) .. (-1.5,1.3);
   \draw[thick,dashed,->] (-3.6,-2.5) .. controls(-2.5,-1) .. (-1.3,-1.);
   \draw[thick,dashed,->] (5.2,-2.7) .. controls(2.5,-3.5) .. (2,-1.4);
   \draw[thick,dashed,->] (6.2,0.2) .. controls(5,-0.5) .. (3.1,0.7);
  \end{tikzpicture}
 }
\end{center}
\caption{The multi-patch domain $\overline{\Omega}= \cup_{i \in \mathcal{I}_{\Omega}}\overline{\Omega^{(i)}}$ with the corresponding geometry 
mappings $\bfm{F}^{(i)}$, $i \in \mathcal{I}_{\Omega}$. } 
\label{fig:geommetryToOmega}
\end{figure}

Let us describe now the isogeometric spline spaces that will be considered in this work. 
We denote by $\mathcal{S}_h^{p,r}([0,1])$ the univariate spline space of degree~$p$, regularity~$r$ and mesh size~$h=\frac{1}{k+1}$, 
which is defined on the unit interval~$[0,1]$, and which is constructed from the uniform open knot vector
\begin{equation*}  
(\underbrace{0,\ldots,0}_{(p+1)-\mbox{\scriptsize times}},
\underbrace{\textstyle \frac{1}{k+1},\ldots,\frac{1}{k+1}}_{(p-r) - \mbox{\scriptsize times}},\ldots, 
\underbrace{\textstyle \frac{k}{k+1},\ldots ,\frac{k}{k+1}}_{(p-r) - \mbox{\scriptsize times}},
\underbrace{1,\ldots,1}_{(p+1)-\mbox{\scriptsize times}}),
\end{equation*}
where $k$ is the 
number of different inner knots. Furthermore, let $\mathcal{S}_h^{\ab{p},\ab{r}}([0,1]^2)$ be the tensor-product spline space 
$\mathcal{S}_h^{p,r}([0,1]) \otimes \mathcal{S}_h^{p,r}([0,1])$ on the unit-square~$[0,1]^2$. We denote the B-splines of the 
spaces~$\mathcal{S}_h^{p,r}([0,1])$ and $\mathcal{S}_h^{\ab{p},\ab{r}}([0,1]^2)$ by $N_{j}^{p,r}$ and $N_{j_1,j_2}^{\ab{p},
\ab{r}}=N_{j_1}^{p,r}N_{j_2}^{p,r}$, respectively, with $j,j_1,j_2=0,1,\ldots,n-1$, where $n=p+1+k(p-r)$.
We assume that $p \geq 2\sm +1$ and $\sm \leq r \leq p-(\sm+1)$. 
Since the geometry mappings~$\ab{F}^{(i)}$, $i \in \mathcal{I}_{\Omega}$, are bilinearly parameterized, we trivially have that
\[
\ab{F}^{(i)} \in \mathcal{S}_{h}^{\ab{p},\ab{r}}([0,1]^2) \times \mathcal{S}^{\ab{p},\ab{r}}_{h}([0,1]^2).
\]
The space of
isogeometric functions on~$\Omega$ is given as
\begin{equation*}
\mathcal{V}^{} = \left\{ \phi \in L^2(\overline{\Omega}) \; | \;  \phi |_{\overline{\Omega^{(i)}}} \in {\mathcal{S}_{h}^{\ab{p},\ab{r}}([0,1]^{2})} 
\circ (\ab{F}^{(i)})^{-1}, \; i \in \mathcal{I}_{\Omega}   \right\}.
\end{equation*}
In addition, let
\begin{equation*}
\mathcal{V}^{\sm} =  \mathcal{V} \cap \mathcal{C}^\sm (\overline{\Omega})
\end{equation*}
be the space of $C^s$-smooth isogeometric functions on $\Omega$. For an isogeometric function~$\phi \in \mathcal{V}$, we denote the spline 
functions $\phi \circ \ab{F}^{(i)}$, $i \in \mathcal{I}_{\Omega}$, by $\g^{(i)}$, and specify their spline representations by
\[
 \g^{(i)}(\xi_1,\xi_2) = \sum_{j_1=0}^{n-1}\sum_{j_2=0}^{n-1} \coe^{(i)}_{j_1,j_2} N_{j_1,j_2}^{\ab{p},\ab{r}}(\xi_1,\xi_2), \quad \coe^{(i)}_{j_1,j_2} \in \R.
\]
Moreover, we define the graph~$\bfm{\Sigma} \subseteq \Omega \times \R$ of an isogeometric function~$\phi \in \mathcal{V}$ as the collection of 
the graph surface patches~$\bfm{\Sigma}^{(i)}:[0,1]^2 \rightarrow \Omega^{(i)} \times \R$, $i \in \mathcal{I}_{\Omega}$, given by 
\[
 \bfm{\Sigma}^{(i)}(\xi_1,\xi_2) = \left( \ab{F}^{(i)}(\xi_1,\xi_2), \g^{(i)}(\xi_1,\xi_2) \right)^T.
\]
The space $\mathcal{V}^{\sm}$ can be characterized by means of the concept of geometric continuity of multi-patch surfaces, cf.~\cite{HoLa93,Pe02}. 
An isogeometric function~$\phi \in \mathcal{V}$ belongs to the space~$\mathcal{V}^\sm$ if and only if for any two neighboring patches 
$\Omega^{(i_0)}$ and $\Omega^{(i_1)}$, $i_0,i_1 \in \mathcal{I}_{\Omega}$, with the common edge $\overline{\Gamma^{(i)}}= 
\overline{\Omega^{(i_0)}} \cap \overline{\Omega^{(i_1)}}, i \in \mathcal{I}_{\Gamma}$,  
the associated graph surface patches
$ \bfm{\Sigma}^{(i_0)}$ 
and 
$\bfm{\Sigma}^{(i_1)} $
are $G^{\sm}$-smooth, 
see e.g.~\cite{Pe15,KaViJu15}, i.e., there exists a regular, orientation-preserving reparameterization 
$\Phi^{(i)} = \left( \Phi^{(i)}_1,\Phi^{(i)}_2\right)$, $\Phi^{(i)}_j : [0,1]^2 \to 
[0,1]$, $j=1,2$, such that 
\begin{equation}  \label{Gsreparameterization}
 \partial_1^{j_1} \partial_2^{j_2} \, \bfm{\Sigma}^{(i_1)} \big|_{\overline{\Gamma^{(i)}}}=  \partial_1^{j_1} \partial_2^{j_2} 
 \left( \bfm{\Sigma}^{(i_0)} \circ \Phi^{(i)} \right) \big|_{\overline{\Gamma^{(i)}}} \;, \quad 0\leq j_1+j_2 \leq \sm, \; i_0,i_1 \in 
 \mathcal{I}_{\Omega}.
\end{equation}
Here and throughout the paper, we will denote by $\partial_\ell^j$ the $j$-th partial derivative with respect to the $\ell$-th argument of a 
multivariate function, while we will denote by $\partial^j$ the $j$-th derivative with respect to the argument of a univariate function. 

In the next section, first, the case of a two-patch domain will be analyzed.

%%%%%%%%%%%%%%%%%%%%%%%%%%%%%%%%%%%%%%%%%%%%%%%%%%%%%%%%%%%%%%%%%%%%%%%%%%%%%%%%%%%%%%%%%%%%%%%%%%%%%%%%%%%%%%%%
\section{$C^{\sm}$-smooth isogeometric spline spaces over two-patch domains}  \label{sec:two-patch-case}
%%%%%%%%%%%%%%%%%%%%%%%%%%%%%%%%%%%%%%%%%%%%%%%%%%%%%%%%%%%%%%%%%%%%%%%%%%%%%%%%%%%%%%%%%%%%%%%%%%%%%%%%%%%%%%%%

In this section, we will restrict ourselves to the case of bilinearly parameterized two-patch domains~$\Omega$. In order to simplify the 
notation, we will denote the patches of the two-patch domain as $\overline{\Omega} = \overline{\Omega^{(\LL)}} \cup 
\overline{\Omega^{(\RR)}}$, their intersection by $\overline{\Gamma} = \overline{\Omega^{(\LL)}} \cap \overline{\Omega^{(\RR)}}$, and the 
corresponding reparameterization just as~$\Phi$. We will first study the $G^s$-smoothness condition of the graph surface of a $C^s$-smooth 
isogeometric spline function defined on a bilinear two-patch domain, and will then use it to construct a particular $C^s$-smooth isogeometric 
spline space. The presented work in this section can be seen as an extension of \cite{KaSaTa17a} and \cite{KaVi17c} for $\sm=1$ and $\sm=2$, 
respectively, to an arbitrary smoothness~$\sm$ in case of bilinear two-patch parameterizations. A possible strategy beyond bilinear 
parameterizations is briefly explained in Section~\ref{sec:bilinearlike}.

%%%%%%%%%%%%%%%%%%%%%%%%%%%%%%%%%%%%%%%%%%%%%%%%%%%%%%%%%%%%%
\subsection{$G^{\sm}$-smoothness of graph surfaces}
%%%%%%%%%%%%%%%%%%%%%%%%%%%%%%%%%%%%%%%%%%%%%%%%%%%%%%%%%%%%%

Let $\phi \in \mathcal{V}$, and let $\g^{(\Side)}=\phi \circ \ab{F}^{(\Side)}$, $\Side \in \{\LL,\RR\}$, be the two associated spline 
functions.
To ensure that the 
graph surface $\ab{\Sigma}$
of the isogeometric function~$\phi$, which consists of the two graph surface patches $\ab{\Sigma}^{(\LL)}$ and $\ab{\Sigma}^{(\RR)}$,
is $G^{\sm}$-smooth, the two 
surface patches $\ab{\Sigma}^{(\LL)}$ and $\ab{\Sigma}^{(\RR)}$ have to be joint above their common edge $\Gamma$ with $G^{\sm}$-continuity. 
Without loss of generality, we can assume that $\Phi(0,\xi_2) = (0,\xi_2)$, i.e., the $G^0$ smoothness across the common interface can be written as
\begin{equation} \label{eq:detG0}
 \ab{\Sigma}^{(\LL)}(0,\xi_2) = \ab{\Sigma}^{(\RR)}(0,\xi_2) , \quad \xi_2 \in [0,1].
\end{equation}
In this way the patches $\Omega^{(\LL)}$ and $\Omega^{(\RR)}$ are parameterized as shown in Fig.~\ref{fig:twopatchCase}.
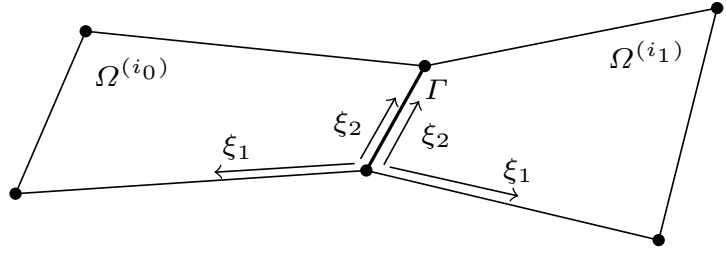
\begin{figure}[hb] 
\begin{center}
\resizebox{0.65\textwidth}{!}{
 \begin{tikzpicture}
  \coordinate(K) at (-0.5,0); \coordinate(\LL) at (0.,0.9); 
  \draw[thick] (\LL) -- (K); 
  \coordinate(A) at (2,-0.6); \coordinate(B) at (2.5,1.4); 
  \draw (A) -- (K); \draw (A) -- (B);  \draw (B) -- (\LL); 

  \coordinate(E) at (-3.5,-0.2); \coordinate(F) at (-2.9,1.2); \coordinate(G) at (-2,3.8); 
  \draw (E) -- (K); \draw (E) -- (F);  \draw (F) -- (\LL); 

  \fill (K) circle (1.5pt); \fill (\LL) circle (1.5pt); 
  \fill (A) circle (1.5pt); \fill (B) circle (1.5pt);  \fill (E) circle (1.5pt); \fill (F) circle (1.5pt);

  \draw[->] (-0.35,0.05) -- (-0.05,0.6);
  \draw[->] (-0.3,0.03) -- (0.8,-0.22);
  \draw[->] (-0.6,0.06) -- (-1.8,-0.015);
  \draw[->] (-0.55,0.1) -- (-0.25,0.63);
 
  \node at (-0.65,0.4) {\scriptsize $\xi_2$};
  \node at (-1.6,0.2) {\scriptsize $\xi_1$};
  \node at (0.8,-0.) {\scriptsize $\xi_1$};
  \node at (0.1,0.3) {\scriptsize $\xi_2$};
   \node at (0.1,0.7) {\scriptsize $\Gamma$};
  \node at (-2.5,0.85) {\scriptsize $\Omega^{(\LL)}$};
  \node at (1.9,1.0) {\scriptsize$\Omega^{(\RR)}$};

  \end{tikzpicture}
 }
 \end{center}
 \caption{The parameterization of the two-patch domain $\Omega^{(\LL)} \cup \Omega^{(\RR)}$ with the common edge $\Gamma$.}
  \label{fig:twopatchCase}
\end{figure}
Furthermore, the $G^1$-smoothness can be expressed as
\begin{equation} \label{eq:detG1}
\det \left( \partial_1 \ab{\Sigma}^{(\RR)}(0,\xi_2) , \partial_1\ab{\Sigma}^{(\LL)}(0,\xi_2) , \partial_2 \ab{\Sigma}^{(\LL)}(0,\xi_2)   
\right) = 0 , \quad \xi_2 \in [0,1],
\end{equation}
which is equivalent to 
\begin{equation} \label{eq:funcG1}
\alpha^{(\LL)}(\xi_2) \partial_1\g^{(\RR)}(0,\xi_2) - \alpha^{(\RR)}(\xi_2) \partial_1 \g^{(\LL)}(0,\xi_2)  - \beta(\xi_2) \partial_2 
\g^{(\LL)}(0,\xi_2) = 0, \quad \xi_2 \in [0,1],
\end{equation}
with
\begin{equation}  \label{eq:alphaLRbar}
 \alpha^{(\Side)}(\xi) = \lambda_1 \det \left( \partial_1 \ab{F}^{(\Side)}(0,\xi), \partial_2 \ab{F}^{(\Side)}(0,\xi) \right), \quad \Side 
 \in \{\LL,\RR \},
\end{equation}
and
$$
\beta(\xi) = \lambda_1 \det \left( \partial_1 \ab{F}^{(\LL)}(0,\xi) , \partial_1\ab{F}^{(\RR)}(0,\xi) \right).
$$ 
We can select $\lambda_1 \in \R$ in such a way, that
\begin{equation} \label{eq:minimizing_alpha}
 || \alpha^{(\LL)}+1 ||^2_{L^2} + || \alpha^{(\RR)}-1 ||^2_{L^2}
\end{equation}
is minimized, cf. \cite{KaVi19a}. Note  
that $\alpha^{(\LL)} < 0$ and $\alpha^{(\RR)} > 0$, since the geometry mappings~$\ab{F}^{(\LL)}$ and 
$\ab{F}^{(\RR)}$ are regular. 
In addition, we can write $\beta$ as
\begin{equation} \label{eq:splitting_beta}
 \beta(\xi) = \alpha^{(\LL)}(\xi) \beta^{(\RR)}(\xi) - \alpha^{(\RR)}(\xi) \beta^{(\LL)}(\xi)
\end{equation}
with
$$
 \beta^{(\Side)}(\xi) = \frac{\partial_1 \ab{F}^{(\Side)}(0,\xi) \cdot \partial_2\ab{F}^{(\Side)}(0,\xi)}
 {||\partial_2 \ab{F}^{(\Side)}(0,\xi)||^{2}}, \quad \Side \in \{\LL,\RR \},
$$
where $\alpha^{(\LL)}, \alpha^{(\RR)}, \beta^{(\LL)}$ and $\beta^{(\RR)}$ are linear polynomials, and $\beta$ is a quadratic one, 
cf. \cite{CoSaTa16,KaVi17c}. 

Recall \eqref{Gsreparameterization}, and let
\begin{equation} \label{eq:aijbij}
  \A_{i,j} (\xi) =  (\partial_1^i \partial_2^j \Phi_1)(0,\xi) , \quad \B_{i,j} (\xi) = (\partial_1^i \partial_2^j \Phi_2)(0,\xi).
\end{equation}
By \eqref{Gsreparameterization}, \eqref{eq:funcG1} and \eqref{eq:aijbij}, we observe that 
\begin{equation} \label{eq:a10b10}
 \A_{1,0}(\xi)  = \frac{\alpha^{(\RR)}(\xi) }{\alpha^{(\LL)}(\xi) }, \quad \B_{1,0}(\xi)  = \frac{\beta(\xi) }{\alpha^{(\LL)}(\xi) }.
\end{equation}

In a similar way as for the $G^1$ smoothness, we can derive conditions for the $G^{\ell}$-smoothness, $2 \leq \ell \leq \sm$ 
(see e.g.~\cite{HoLa93}). For each particular $\ell$ it is enough to consider only the equation
\begin{equation} \label{eq:Gs-condition_graph}
\partial_1^\ell  \ab{\Sigma}^{(\RR)}(0,\xi_2) = \partial_1^\ell \left(\ab{\Sigma}^{(\LL)}\circ 
\Phi \right)(0,\xi_2), \quad \xi_2 \in [0,1],
\end{equation}
since relation \eqref{Gsreparameterization} follows for all other derivatives of order $\ell$ directly from the $G^{\ell-1}$ smoothness. 
Similar to
\eqref{eq:detG1}, we get 
\begin{equation} \label{eq:detGs}
 \det \left( \ab{\Xi}_{\ell} (\xi_2) , \partial_1 \ab{\Sigma}^{(\LL)}(0,\xi_2) , \partial_2 \ab{\Sigma}^{(\LL)}(0,\xi_2)   \right) = 0 , 
 \quad \xi_2 \in [0,1],
\end{equation}
where $\ab{\Xi}_{\ell}$ is given by
\begin{equation}  \label{eq:Xis2}
 \ab{\Xi}_\ell (\xi) = \left(\widetilde{\ab{\Xi}}_{\ell} (\xi),\omega_{\ell} (\xi)  \right) =
 \partial_1^\ell  \ab{\Sigma}^{(\RR)}(0,\xi) - \partial_1^\ell \left(\ab{\Sigma}^{(\LL)}\circ \Phi \right)(0,\xi) + 
 \A_{\ell,0} \, \partial_1 \ab{\Sigma}^{(\LL)} (0,\xi)+  \B_{\ell,0} \, \partial_2 \ab{\Sigma}^{(\LL)} (0,\xi). 
 \end{equation}
Expanding \eqref{eq:detGs} 
leads to
\begin{equation} \label{eq:Gs2}
\lambda_\ell \, \alpha^{(\LL)}(\xi_2) \, \omega_\ell (\xi_2) + \eta_\ell (\xi_2) \, \partial_1 \g^{(\LL)}(0,\xi_2) + \theta_\ell (\xi_2)\, 
\partial_2 \g^{(\LL)}(0,\xi_2) = 0, \quad \xi_2 \in [0,1],
\end{equation}
with
\begin{equation*} \label{eq:Gs2general}
\eta_\ell (\xi) = \lambda_\ell \det \left(  \partial_2 \ab{F}^{(\LL)}(0,\xi),  \widetilde{\ab{\Xi}}_{\ell} (\xi) \right) 
\quad
{\rm and}
\quad
\theta_\ell (\xi) = \lambda_\ell \det \left(  \widetilde{\ab{\Xi}}_{\ell} (\xi), \partial_1 \ab{F}^{(\LL)}(0,\xi) \right). 
\end{equation*}
\begin{remark}
For the sake of simplicity, we will choose $\lambda_\ell=1, \; \ell=2,3,\ldots,\sm$.
\end{remark}
Let 
\begin{equation} \label{eq:cl}
c_\ell (\xi) := \sum_{i=1}^{\ell-1} {\ell \choose i} \A_{i,0}(\xi) \, \B_{\ell-i,0}(\xi), \quad \ell \geq 2,
\end{equation}
then we get in case of the bilinear mapping $\ab{F}^{(\LL)}$
\begin{align}  
\eta_\ell (\xi) &= -  c_\ell (\xi) \, \det \left(  \partial_2 \ab{F}^{(\LL)}(0,\xi),   \partial_1 \partial_2 \ab{F}^{(\LL)}(0,\xi) \right)  = 
\left(\alpha^{(\LL)}(\xi) \right)' \, c_\ell (\xi) , \label{eq:eta}
\end{align}
and
\begin{align}
\theta_\ell (\xi) &= - c_\ell (\xi)  \,\det \left(  \partial_1 \partial_2 \ab{F}^{(\LL)}(0,\xi),  \partial_1 \ab{F}^{(\LL)}(0,\xi) \right) 
\nonumber \\ 
& = \left( \alpha^{(\LL)}(\xi) \left(\beta^{(\LL)}(\xi) \right)' - \left(\alpha^{(\LL)}(\xi) \right)' \beta^{(\LL)}(\xi)\right)   c_\ell (\xi). 
\label{eq:theta}
\end{align}
It further follows from \eqref{eq:detGs}, \eqref{eq:Xis2} and \eqref{eq:Gs2} that
\begin{equation} \label{eq:etatheta}
\eta_\ell(\xi) = - \alpha^{(\LL)}(\xi) \A_{\ell,0}(\xi)\quad {\rm and} \quad 
\theta_\ell(\xi) = - \alpha^{(\LL)}(\xi) \B_{\ell,0}(\xi), \quad 1 \leq \ell \leq \sm,
\end{equation}  
which directly leads to the following theorem.
\begin{theorem} \label{thm1}
The functions $\A_{\ell,0}$ and $\B_{\ell,0}$, $1\leq \ell \leq \sm$, can be expressed by $\alpha^{(\Side)}, \beta^{(\Side)}$, 
$\Side\in \{\LL,\RR\}$, via the recursion 
\begin{align*}
\A_{1,0}(\xi) &= \frac{\alpha^{(\RR)}(\xi)}{\alpha^{(\LL)}(\xi)}, \quad \B_{1,0}(\xi) = \frac{\beta(\xi)}{\alpha^{(\LL)}(\xi)}, \nonumber\\
\A_{\ell,0}(\xi)  & = \vartheta(\xi) c_{\ell}(\xi), \quad \B_{\ell,0}(\xi)= \mu(\xi)c_{\ell}(\xi), \quad 2\leq \ell \leq \sm, 
\label{eq:abell0} 
\end{align*}
where the function $c_{\ell}$ is given in \eqref{eq:cl}, and the functions~$\vartheta$ and $\mu$ are defined as 
\[
 \vartheta(\xi) = - \frac{(\alpha^{(\LL)}(\xi))'}{\alpha^{(\LL)}(\xi)}, \quad 
  \mu(\xi)= - \frac{\left( \alpha^{(\LL)}(\xi) \left(\beta^{(\LL)}(\xi) \right)' - \left(\alpha^{(\LL)}(\xi) \right)' 
\beta^{(\LL)}(\xi)\right)}{\alpha^{(\LL)}(\xi)}.
\]
\end{theorem}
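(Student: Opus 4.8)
The plan is to assemble the theorem purely from identities that have already been established in the excerpt, so that the proof is essentially a bookkeeping argument rather than a new computation. First I would recall the two base cases: the expressions for $\A_{1,0}$ and $\B_{1,0}$ are exactly \eqref{eq:a10b10}, which were derived from the $G^1$-smoothness condition \eqref{eq:funcG1} together with the definitions \eqref{eq:aijbij}, so nothing remains to be shown there. The content of the theorem is therefore the recursion for $2\leq \ell \leq \sm$, and the strategy is to combine the two independent formulas obtained for $\eta_\ell$ and $\theta_\ell$.

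The key step is to equate the two descriptions of $\eta_\ell$ and $\theta_\ell$. On one hand, specializing the general reparameterization identity to the bilinear geometry mapping $\ab{F}^{(\LL)}$ gives, via \eqref{eq:eta} and \eqref{eq:theta},
\[
\eta_\ell(\xi) = \bigl(\alpha^{(\LL)}(\xi)\bigr)'\, c_\ell(\xi), \qquad
\theta_\ell(\xi) = \Bigl(\alpha^{(\LL)}(\xi)\bigl(\beta^{(\LL)}(\xi)\bigr)' - \bigl(\alpha^{(\LL)}(\xi)\bigr)'\beta^{(\LL)}(\xi)\Bigr)\, c_\ell(\xi),
\]
where $c_\ell$ is the quantity defined in \eqref{eq:cl} collecting the lower-order mixed terms $\A_{i,0}\B_{\ell-i,0}$. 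On the other hand, comparing the determinant expansion \eqref{eq:detGs}--\eqref{eq:Gs2} with the coefficients of $\partial_1\g^{(\LL)}$ and $\partial_2\g^{(\LL)}$ yields \eqref{eq:etatheta}, namely $\eta_\ell = -\alpha^{(\LL)}\A_{\ell,0}$ and $\theta_\ell = -\alpha^{(\LL)}\B_{\ell,0}$ for all $1\leq\ell\leq\sm$. Setting the two expressions for $\eta_\ell$ equal and dividing by $\alpha^{(\LL)}(\xi)$ — which is legitimate since $\alpha^{(\LL)}<0$ does not vanish, as noted after \eqref{eq:splitting_beta} — gives $\A_{\ell,0}(\xi) = -\bigl((\alpha^{(\LL)})'/\alpha^{(\LL)}\bigr)(\xi)\, c_\ell(\xi) = \vartheta(\xi)c_\ell(\xi)$; doing the same for $\theta_\ell$ gives $\B_{\ell,0}(\xi) = \mu(\xi)c_\ell(\xi)$. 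This is exactly the claimed recursion, and since each $c_\ell$ depends only on $\A_{i,0},\B_{i,0}$ with $i<\ell$, an obvious induction on $\ell$ shows that every $\A_{\ell,0}$ and $\B_{\ell,0}$ is ultimately an expression in $\alpha^{(\Side)}$ and $\beta^{(\Side)}$, $\Side\in\{\LL,\RR\}$ (recalling \eqref{eq:splitting_beta} which rewrites $\beta$ in those terms).

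The main obstacle is not the final algebra but making sure the two representations of $\eta_\ell$ and $\theta_\ell$ are genuinely available: the derivation of \eqref{eq:eta}--\eqref{eq:theta} requires expanding $\partial_1^\ell(\ab{\Sigma}^{(\LL)}\circ\Phi)$ by the multivariate Faà di Bruno / chain rule, isolating the top-order term $\A_{\ell,0}\partial_1\ab{\Sigma}^{(\LL)} + \B_{\ell,0}\partial_2\ab{\Sigma}^{(\LL)}$ (which is precisely what \eqref{eq:Xis2} subtracts off), and observing that for a \emph{bilinear} map $\ab{F}^{(\LL)}$ one has $\partial_1^i\ab{F}^{(\LL)} = \ab{0}$ for $i\geq 2$ and $\partial_1\partial_2^j\ab{F}^{(\LL)}=\ab{0}$ for $j\geq 2$, so that the only surviving second-order contributions to $\widetilde{\ab{\Xi}}_\ell$ are the cross terms weighted by $\binom{\ell}{i}\A_{i,0}\B_{\ell-i,0}$, i.e.\ by $c_\ell$. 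Once that reduction is accepted, the determinants in the definitions of $\eta_\ell,\theta_\ell$ collapse to multiples of $\det(\partial_2\ab{F}^{(\LL)},\partial_1\partial_2\ab{F}^{(\LL)})$ and $\det(\partial_1\partial_2\ab{F}^{(\LL)},\partial_1\ab{F}^{(\LL)})$, and one identifies these with $(\alpha^{(\LL)})'$ and $\alpha^{(\LL)}(\beta^{(\LL)})'-(\alpha^{(\LL)})'\beta^{(\LL)}$ respectively by differentiating the defining formulas \eqref{eq:alphaLRbar} and the expression for $\beta^{(\LL)}$. I would present these determinant identities as short lemmas or inline remarks, then state that the theorem follows immediately by combining them with \eqref{eq:etatheta} and cancelling $\alpha^{(\LL)}$.
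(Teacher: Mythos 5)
Your proposal is correct and follows essentially the same route as the paper: the base case is \eqref{eq:a10b10}, and for $2\leq\ell\leq\sm$ you equate the bilinear-geometry expressions \eqref{eq:eta}--\eqref{eq:theta} for $\eta_\ell,\theta_\ell$ (obtained from the Fa\`a di Bruno expansion and the vanishing of all but the mixed second derivative of $\ab{F}^{(\LL)}$, which isolates the factor $c_\ell$) with the identities \eqref{eq:etatheta}, then divide by the nonvanishing $\alpha^{(\LL)}$. This is precisely how the paper arrives at Theorem~\ref{thm1}, so nothing further is needed.
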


Theorem~\ref{thm1} provides us also closed form formulae for the functions $ \A_{\ell,0}(\xi)$ and $ \B_{\ell,0}(\xi)$, $\ell \geq 2$, which only depend on 
$\alpha^{(i_0)}(\xi), \alpha^{(i_1)}(\xi), \beta^{(i_0)}(\xi)$ and $\beta^{(i_1)}(\xi)$, and which are equal to 
\begin{align*}
 \A_{\ell,0}(\xi) &= \ell ! \, \frac{\alpha^{(i_1)}(\xi) \, \beta_{}(\xi)}{\left(\alpha^{(i_0)}(\xi)\right)^{\ell}} \, \vartheta(\xi) \sum_{j=0}^{\ell-2} N(\ell-1,j+1) \left(\mu(\xi) 
 \,\alpha^{(i_1)}(\xi)\right)^j 
\left(\vartheta(\xi)\, \beta(\xi) \right)^{\ell-2-j}, \\
 \B_{\ell,0}(\xi) &= \ell ! \, \frac{\alpha^{(i_1)}(\xi) \, \beta_{}(\xi)}{\left(\alpha^{(i_0)}(\xi)\right)^{\ell}} \, \mu(\xi) \sum_{j=0}^{\ell-2} N(\ell-1,j+1)  \left(\mu(\xi) \,\alpha^{(i_1)}(\xi)\right)^j 
\left(\vartheta(\xi)\, \beta(\xi) \right)^{\ell-2-j},
\end{align*}
where $\beta(\xi)$ is given in \eqref{eq:splitting_beta} and
$$
N(m_1,m_2) = \frac{1}{m_1} {m_1 \choose m_2} {m_1 \choose m_2-1}, \quad 1\leq m_2 \leq m_1,\;  m_1, m_2 \in \N,
$$ 
are the well-known Narayana numbers. 

\begin{example}
Let us express the functions $\ab{\Xi}_\ell, \eta_\ell$ and $\theta_\ell$ for $\ell \in \{ 1, 2,3\}$. 
Note that the case $\ell=1$ has been already explained above, and that the case $\ell=2$ can be found e.g. in~\cite{KaVi17c}, too.
Using \eqref{eq:Xis2}, \eqref{eq:cl}, \eqref{eq:eta}, \eqref{eq:theta} and \eqref{eq:etatheta}, we get for $\ell=1$
  \begin{equation*}
   \ab{\Xi}_1(\xi) = \partial_1  \ab{\Sigma}^{(\RR)}(0,\xi), \quad
      \eta_1(\xi) = -  \alpha^{(\RR)}(\xi) , \quad  \theta_1(\xi) =  - \beta(\xi),
    \end{equation*}
    for $\ell=2$
 \begin{align*}
   \ab{\Xi}_2(\xi) &= \partial_1^2  \ab{\Sigma}^{(\RR)}(0,\xi) - 
   \left( \A_{1,0}^2(\xi) \partial_1^2  \ab{\Sigma}^{(\LL)}(0,\xi)  +   2 \A_{1,0}(\xi) \B_{1,0}(\xi) \partial_1 \partial_2  
   \ab{\Sigma}^{(\LL)}(0,\xi)   +  \B_{1,0}^2(\xi) \partial_2^2  \ab{\Sigma}^{(\LL)}(0,\xi)  \right), \\
      \eta_2(\xi) &= - \frac{2 \alpha^{(\RR)}(\xi) \beta(\xi) }{(\alpha^{(\LL)}(\xi))^{} } \vartheta(\xi),   \quad
   \theta_2(\xi) = - \frac{2  \alpha^{(\RR)}(\xi) \beta(\xi)} {(\alpha^{(\LL)}(\xi))^{} }  \mu(\xi),
    \end{align*}
    and for $\ell=3$
     \begin{align*} 
     \ab{\Xi}_3(\xi) &= \partial_1^3  \ab{\Sigma}^{(\RR)}(0,\xi) - 
   \left( \A_{1,0}^3(\xi) \partial_1^3  \ab{\Sigma}^{(\LL)}(0,\xi)  +   3 \A_{1,0}^2 (\xi) \B_{1,0}(\xi) \partial_1^2 \partial_2  
   \ab{\Sigma}^{(\LL)}(0,\xi)    \right. \\
   & + \left. 3 \A_{1,0} (\xi) \B_{1,0}^2(\xi) \partial_1 \partial_2^2  \ab{\Sigma}^{(\LL)}(0,\xi)+  \B_{1,0}^3(\xi) \partial_2^3  
   \ab{\Sigma}^{(\LL)}(0,\xi)  + 
   3 \A_{1,0} (\xi) \A_{2,0}(\xi) \partial_1^2 \ab{\Sigma}^{(\LL)}(0,\xi) \right. \\
   & + \left. 3 (\B_{1,0} (\xi) \A_{2,0}(\xi) +\A_{1,0} (\xi) \B_{2,0}(\xi) ) \partial_1 \partial_2  \ab{\Sigma}^{(\LL)}(0,\xi) + 3 \B_{1,0} (\xi) 
   \B_{2,0}(\xi) \partial_2^2 \ab{\Sigma}^{(\LL)}(0,\xi)  
   \right),\\
   \eta_3(\xi) &  =  \frac{6  \alpha^{(\RR)}(\xi) \beta(\xi) 
   \left(\alpha^{(\LL)}(\xi) \alpha^{(\RR)}(\xi)  (\beta^{(\LL)} (\xi) )' + 
   (\alpha^{(\LL)}(\xi))' (\beta(\xi) - \alpha^{(\RR)}(\xi) \beta^{(\LL)}(\xi) )  \right) }
   {(\alpha^{(\LL)}(\xi))^{3} } \vartheta(\xi), \\ 
   \theta_3(\xi) & 
   = \frac{ 6 \alpha^{(\RR)}(\xi) \beta(\xi) 
   \left(\alpha^{(\LL)}(\xi) \alpha^{(\RR)}(\xi)  (\beta^{(\LL)}(\xi) )' + (\alpha^{(\LL)}(\xi))' (\beta(\xi) - \alpha^{(\RR)}(\xi) \beta^{(\LL)}(\xi) )  
   \right) }
   {(\alpha^{(\LL)}(\xi))^{3} } \mu(\xi). \\ 
 \end{align*}
\end{example}

In order to write $\bfm{\Xi}_\ell(\xi)$ explicitly also 
for $\ell > 3$, we need the closed-form expression of 
$$ 
\partial_1^\ell \left(\ab{\Sigma}^{(\LL)}\circ \Phi \right)(0,\xi).
$$ 
This requires the use of the generalized Fa\`{a} di Bruno's formula~\cite{FaaBruno}, i.e.
\begin{equation}  \label{eq:FaaBruno}
   \partial_1^\ell \left(\ab{\Sigma}^{(\LL)}\circ \Phi \right)(0,\xi) = 
   \sum_{|\bfm{\sigma}| = 1}^{\ell} \AS_{\bfm{\sigma};\ell}(\xi) \, \partial_1^{\sigma_1} \partial_2^{\sigma_2} \ab{\Sigma}^{(\LL)}(0,\xi) ,
\end{equation}
where $\bfm{\sigma} = (\sigma_1,\sigma_2)$ is a multi-index with the indices~$\sigma_1,\sigma_2 \in \{0,1,\ldots,\ell \}$, $1 \leq \sigma_1+ \sigma_2 
\leq \ell$, $|\bfm{\sigma}| = \sigma_1+\sigma_2$ is the length of the multi-index, and
\begin{equation*}
 \AS_{\bfm{\sigma};\ell}(\xi) = \ell\, ! \sum_{(\bfm{i},\bfm{j}) \in \mathcal{I}_{\bfm{\sigma};\ell}} \,
 \prod_{\rho=1}^{\ell}  \A_{\rho,0}^{i_\rho} (\xi) \,  \B_{\rho,0}^{j_\rho}(\xi) \frac{1}{\rho ! \, ^{i_\rho+j_\rho} \, i_\rho ! \, j_\rho !},
\end{equation*}
with $\A_{\rho,0}(\xi)$ and  $\B_{\rho,0}(\xi)$ given in \eqref{eq:aijbij}, and
$$
   \mathcal{I}_{\bfm{\sigma};\ell} = \left\{    (\bfm{i},\bfm{j}) = ((i_1,i_2,\ldots,i_\ell),(j_1,j_2,\ldots,j_\ell))\, \Big| \; |\bfm{i}| = 
   \sigma_1,   |\bfm{j}| = \sigma_2, \sum_{\rho=1}^{\ell} \rho (i_\rho + j_\rho) = \ell  \right\}.
$$
\begin{example}
The sets $ \mathcal{I}_{\bfm{\sigma};3}$ as well as the functions~$\AS_{\bfm{\sigma};3}(\xi)$ are equal to
\begin{align*}
 \mathcal{I}_{(1,0);3} &= \{ \left( (0,0,1),(0,0,0) \right) \}, \quad
   \mathcal{I}_{(0,1);3} = \{ \left( (0,0,0),(0,0,1) \right) \}, \\
      \mathcal{I}_{(2,0);3} & = \{ \left( (1,1,0),(0,0,0) \right) \}, \quad  \mathcal{I}_{(0,2);3} = \{ \left( (0,0,0),(1,1,0) \right) \},\\
   \mathcal{I}_{(1,1);3} &= \{ \left( (1,0,0),(0,1,0) \right), \left( (0,1,0),(1,0,0) \right) \}, \\
   \mathcal{I}_{(3,0);3} &= \{ \left( (3,0,0),(0,0,0) \right) \}, \quad
   \mathcal{I}_{(2,1);3} = \{ \left( (2,0,0),(1,0,0) \right) \},  \\
   \mathcal{I}_{(1,2);3} &= \{ \left( (1,0,0),(2,0,0) \right) \}, \quad
   \mathcal{I}_{(0,3);3} = \{ \left( (0,0,0),(3,0,0) \right) \}, 
\end{align*}
and
\begin{align*}
 \AS_{(1,0);3}(\xi) & = \A_{3,0}(\xi),  \quad
   \AS_{(0,1);3}(\xi) = \B_{3,0}(\xi),  \quad
    \AS_{(2,0);3}(\xi) = 3 \A_{1,0}(\xi) \A_{2,0}(\xi),  \\
 \AS_{(1,1);3}(\xi) & = 3 \left( \A_{1,0}(\xi) \B_{2,0}(\xi) +  \B_{1,0}(\xi) \A_{2,0}(\xi) \right),  \quad
   \AS_{(0,2);3}(\xi) = 3 \B_{1,0}(\xi) \B_{2,0}(\xi),  \\
   \AS_{(3,0);3}(\xi) &= \A_{1,0}^3(\xi),  \quad \AS_{(2,1);3}(\xi) = 3 \A_{1,0}^2(\xi) \B_{1,0}(\xi),  \\
   \AS_{(1,2);3}(\xi) &= 3 \A_{1,0}(\xi) \B_{1,0}^2(\xi),  \quad \AS_{(0,3);3}(\xi) = \B_{1,0}^3(\xi).
\end{align*}
\end{example}
\vskip0.5em

Directly following from equations~\eqref{eq:detG0} and \eqref{eq:Gs-condition_graph}, and further summarizing the results of this section, 
we obtain the following proposition.
\begin{proposition} \label{prop:Cs}
Let $\Omega$ be a bilinearly parameterized two-patch domain, i.e. $\overline{\Omega} = \overline{\Omega^{(\LL)}} \cup 
\overline{\Omega^{(\RR)}}$. An isogeometric function 
$\phi \in \mathcal{V}$ belongs to the space $\mathcal{V}^{\sm}$ if and only if 
the two associated spline functions $\g^{(\Side)}=\phi \circ \ab{F}^{(\Side)}$, $\Side \in \{\LL,\RR\}$, fulfill for $\ell =0,1,\ldots,s$,
\begin{equation*} \label{eq:Cs-condition_phi}
\partial_1^\ell  \g^{(\RR)}(0,\xi_2) = \partial_1^\ell \left(\g^{(\LL)}\circ \Phi \right)(0,\xi_2), \quad \xi_2 \in [0,1],
\end{equation*}
 or equivalently
\begin{equation*}
 \g^{(\RR)}(0,\xi_2) = \g^{(\LL)}(0,\xi_2), \quad \xi_2 \in [0,1],
\end{equation*}
and for $\ell=1,2,\ldots,s$,
\begin{equation*}
\alpha^{(\LL)}(\xi_2) \, \omega_\ell (\xi_2) + \eta_\ell (\xi_2) \, \partial_1 \g^{(\LL)}(0,\xi_2) + \theta_\ell (\xi_2)\, 
\partial_2 \g^{(\LL)}(0,\xi_2) = 0, \quad \xi_2 \in [0,1],
\end{equation*}
where $\alpha^{(\LL)} $ is defined via~\eqref{eq:alphaLRbar} and \eqref{eq:minimizing_alpha}, and $\omega_\ell$, $\eta_{\ell}$ and $\theta_{\ell}$ 
are expressed by means of \eqref{eq:Xis2}, \eqref{eq:etatheta} and Theorem~\ref{thm1}. 
\end{proposition}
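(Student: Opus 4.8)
The plan is to prove Proposition~\ref{prop:Cs} by chaining together the characterizations already assembled in this section. The starting point is the general equivalence recalled in Section~\ref{sec:multipatch}: an isogeometric function $\phi \in \mathcal{V}$ lies in $\mathcal{V}^{\sm}$ if and only if the graph surface patches $\ab{\Sigma}^{(\LL)}$ and $\ab{\Sigma}^{(\RR)}$ meet with $G^{\sm}$-continuity across $\Gamma$, i.e. \eqref{Gsreparameterization} holds for some regular orientation-preserving reparameterization $\Phi$. Since we have normalized $\Phi(0,\xi_2)=(0,\xi_2)$, the $G^{0}$-part of \eqref{Gsreparameterization} reduces to \eqref{eq:detG0}, which in the last two components is exactly $\g^{(\RR)}(0,\xi_2)=\g^{(\LL)}(0,\xi_2)$. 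First I would record this reduction and note that, as observed around \eqref{eq:Gs-condition_graph}, for each order $\ell$ with $2\leq \ell \leq \sm$ it suffices to impose the single vector equation \eqref{eq:Gs-condition_graph}, because all remaining derivatives of order $\ell$ in \eqref{Gsreparameterization} are forced by the lower-order conditions; the same reduction for $\ell=1$ is \eqref{eq:detG1}. Thus the full $G^{\sm}$ system is equivalent to \eqref{eq:detG0} together with the $\sm$ scalar conditions $\partial_1^\ell \g^{(\RR)}(0,\xi_2) = \partial_1^\ell(\g^{(\LL)}\circ\Phi)(0,\xi_2)$ in the third (graph) component, plus the corresponding first-two-component equations which merely pin down $\A_{\ell,0},\B_{\ell,0}$.

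Next I would carry out the translation of each $\ell$-th order graph condition into the stated form. The point is that \eqref{eq:Gs-condition_graph}, written out via the first two (geometry) components, is equivalent to the determinant condition \eqref{eq:detGs} with $\ab{\Xi}_\ell$ defined in \eqref{eq:Xis2}: indeed, the first two components of $\partial_1^\ell \ab{\Sigma}^{(\RR)}(0,\xi) - \partial_1^\ell(\ab{\Sigma}^{(\LL)}\circ\Phi)(0,\xi)$ are a linear combination of $\partial_1\ab{F}^{(\LL)}(0,\xi)$ and $\partial_2\ab{F}^{(\LL)}(0,\xi)$ precisely because the geometry mappings $C^{\sm}$-match across $\Gamma$ (here one uses that $\ab{F}^{(\LL)}\circ\Phi = \ab{F}^{(\RR)}$ up to order $\sm$ by construction of $\Phi$), and those coefficients are exactly $\A_{\ell,0}$ and $\B_{\ell,0}$; subtracting that combination back off leaves a vector $\ab{\Xi}_\ell$ whose first two components vanish, so the full equation $\partial_1^\ell \ab{\Sigma}^{(\RR)}(0,\xi) = \partial_1^\ell(\ab{\Sigma}^{(\LL)}\circ\Phi)(0,\xi)$ collapses to the single scalar statement that $\ab{\Xi}_\ell(\xi) $ is a linear combination of $\partial_1\ab{\Sigma}^{(\LL)}(0,\xi)$ and $\partial_2\ab{\Sigma}^{(\LL)}(0,\xi)$, i.e.\ to \eqref{eq:detGs}. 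Expanding that $3\times 3$ determinant along the graph components and using the definitions of $\alpha^{(\LL)}$, $\eta_\ell$, $\theta_\ell$ yields \eqref{eq:Gs2}, which with the convention $\lambda_\ell=1$ and the identification $\omega_\ell = $ (third component of $\ab{\Xi}_\ell$) is the displayed condition
\begin{equation*}
\alpha^{(\LL)}(\xi_2)\,\omega_\ell(\xi_2) + \eta_\ell(\xi_2)\,\partial_1\g^{(\LL)}(0,\xi_2) + \theta_\ell(\xi_2)\,\partial_2\g^{(\LL)}(0,\xi_2) = 0.
\end{equation*}
Finally I would invoke \eqref{eq:etatheta} together with Theorem~\ref{thm1} to note that $\eta_\ell$, $\theta_\ell$ and (through $\ab{\Xi}_\ell$, via the $\A_{i,0},\B_{i,0}$ with $i<\ell$ and the Fa\`a di Bruno expansion \eqref{eq:FaaBruno}) also $\omega_\ell$ are expressible purely in terms of $\alpha^{(\Side)},\beta^{(\Side)}$, so that the conditions are intrinsic data of the two-patch parameterization and do not depend on any remaining freedom in $\Phi$.

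I would then assemble the two directions. For the forward direction, if $\phi\in\mathcal{V}^{\sm}$ then $G^{\sm}$-smoothness of the graph holds, hence \eqref{eq:detG0} and \eqref{eq:detGs} for $1\leq\ell\leq\sm$, hence by the above the listed scalar equations. For the converse, given the scalar equations one reconstructs, order by order in $\ell$, the validity of \eqref{eq:Gs-condition_graph} in the graph component — the geometry components of \eqref{eq:Gs-condition_graph} hold automatically since $\Phi$ was chosen so that $\ab{F}^{(\LL)}\circ\Phi$ and $\ab{F}^{(\RR)}$ agree to order $\sm$ on $\Gamma$ — and then, as noted after \eqref{eq:Gs-condition_graph}, all the other order-$\ell$ equalities in \eqref{Gsreparameterization} follow by differentiating the already-established order-$(\ell-1)$ relations; an induction on $\ell$ closes this. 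Combined with \eqref{eq:detG0} for $\ell=0$ this gives \eqref{Gsreparameterization} in full, i.e.\ $G^{\sm}$-smoothness of the graph, i.e.\ $\phi\in\mathcal{V}^{\sm}$. The main obstacle I anticipate is purely bookkeeping rather than conceptual: making airtight the claim that for each $\ell$ only the single equation \eqref{eq:Gs-condition_graph} (and not the full family of mixed derivatives of total order $\ell$) needs to be imposed, and that the geometry components are automatically satisfied by the chosen $\Phi$; this rests on the compatibility of $\Phi$ with the bilinear geometry mappings and on the inductive structure already sketched around \eqref{eq:Gs-condition_graph}, so I would state it carefully but not re-derive the Fa\`a di Bruno combinatorics, which is exactly what Theorem~\ref{thm1} and the preceding examples have packaged.
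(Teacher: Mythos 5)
Your proposal is correct and follows essentially the same route as the paper: the paper states Proposition~\ref{prop:Cs} as ``directly following'' from \eqref{eq:detG0}, the reduction to the single equation \eqref{eq:Gs-condition_graph} per order $\ell$, the determinant condition \eqref{eq:detGs} with \eqref{eq:Xis2}, its expansion \eqref{eq:Gs2}, and the identifications \eqref{eq:etatheta} and Theorem~\ref{thm1}, which is precisely the chain you reassemble (with both implications made explicit). Apart from minor phrasing issues (e.g.\ the order-$\ell$ terms $\A_{\ell,0}\,\partial_1\bfm{\Sigma}^{(\LL)}+\B_{\ell,0}\,\partial_2\bfm{\Sigma}^{(\LL)}$ are added back in \eqref{eq:Xis2} so that $\bfm{\Xi}_\ell$ is independent of the yet-undetermined order-$\ell$ data of $\Phi$, rather than ``subtracted off''), there is no substantive difference.
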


%%%%%%%%%%%%%%%%%%%%%%%%%%%%%%%%%%%%%%%%%%%%%%%%%%%%%%%%%%%%%%%%%%%%%%%%%%%%%%%%%%%%%%%%%%%%%%%%%%%%%%
\subsection{Construction of $C^{\sm}$-smooth isogeometric spline spaces} \label{subsec:design_twopatches}
%%%%%%%%%%%%%%%%%%%%%%%%%%%%%%%%%%%%%%%%%%%%%%%%%%%%%%%%%%%%%%%%%%%%%%%%%%%%%%%%%%%%%%%%%%%%%%%%%%%%%%

Proposition~\ref{prop:Cs} describes the $C^s$-smoothness condition for an isogeometric function~$\phi \in \mathcal{V}$.
The following theorem provides now an equivalent but simplified condition, which will be the key step 
for the construction of $C^s$-smooth isogeometric functions.  
\begin{theorem} \label{thm:g_ell}
 Let $\Omega$ be a bilinearly parameterized two-patch domain, i.e. $\overline{\Omega} = \overline{\Omega^{(\LL)}} \cup \overline{\Omega^{(\RR)}}$.
  An isogeometric function 
  $\phi \in \mathcal{V}$
  belongs to the space $\mathcal{V}^{\sm}$ if and only if 
  the corresponding spline functions $\g^{(\LL)}=\phi \circ \ab{F}^{(\LL)}$ and $\g^{(\RR)}=\phi \circ \ab{F}^{(\RR)}$ satisfy
 \begin{equation}   \label{eq:gC}
 \g_\ell^{(\LL)}(\xi) = \g_\ell^{(\RR)}(\xi) =: \gC_\ell(\xi), \quad \ell =0,1,\ldots,\sm,
 \end{equation}
 with
 \begin{equation}   \label{eq:gC2}
 \g_\ell^{(\Side)}(\xi) = \left(\alpha^{(\Side)}(\xi)\right)^{-\ell}\, \partial_1^\ell \g^{(\Side)}(0,\xi) - \sum_{i=0}^{\ell-1} {\ell \choose i} 
 \left(\frac{\beta^{(\Side)}(\xi)}{\alpha^{(\Side)}(\xi)}\right)^{\ell-i}  \dd^{\ell-i} \gC_i^{}(\xi) ,\quad \Side\in \{\LL,\RR\}.
 \end{equation}
\end{theorem}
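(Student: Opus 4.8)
The plan is to prove Theorem~\ref{thm:g_ell} by induction on $\ell$, showing that for each $\ell \in \{0,1,\ldots,\sm\}$ the condition $\g_\ell^{(\LL)} = \g_\ell^{(\RR)}$ from \eqref{eq:gC}--\eqref{eq:gC2} is equivalent, modulo the already-established conditions for indices $0,1,\ldots,\ell-1$, to the corresponding $G^\ell$-smoothness equation in Proposition~\ref{prop:Cs}. Since the $C^\sm$-smoothness of $\phi$ is by Proposition~\ref{prop:Cs} precisely the conjunction of the $G^0$ condition and the $G^\ell$ conditions for $\ell=1,\ldots,\sm$, establishing this equivalence level by level yields the theorem. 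The base case $\ell=0$ is immediate: $\g_0^{(\Side)}(\xi) = \g^{(\Side)}(0,\xi)$ (the sum in \eqref{eq:gC2} is empty and $(\alpha^{(\Side)})^0 = 1$), so $\g_0^{(\LL)} = \g_0^{(\RR)}$ is exactly the $G^0$ condition $\g^{(\RR)}(0,\xi) = \g^{(\LL)}(0,\xi)$, and this also fixes $\gC_0 = \gC_0^{}(\xi)$ as the common trace.

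For the inductive step, fix $\ell$ with $1 \le \ell \le \sm$ and assume \eqref{eq:gC} holds for all indices $0,1,\ldots,\ell-1$, so that $\gC_0,\ldots,\gC_{\ell-1}$ are well-defined common functions. I would first unpack the $\ell$-th equation of Proposition~\ref{prop:Cs}, namely
\begin{equation*}
\alpha^{(\LL)}(\xi)\, \omega_\ell(\xi) + \eta_\ell(\xi)\, \partial_1 \g^{(\LL)}(0,\xi) + \theta_\ell(\xi)\, \partial_2 \g^{(\LL)}(0,\xi) = 0,
\end{equation*}
using the expression \eqref{eq:Xis2} for $\omega_\ell = \big(\ab{\Xi}_\ell\big)_3$ (the third, scalar component of $\ab{\Xi}_\ell$), which reads
\begin{equation*}
\omega_\ell(\xi) = \partial_1^\ell \g^{(\RR)}(0,\xi) - \partial_1^\ell\big(\g^{(\LL)} \circ \Phi\big)(0,\xi) + \A_{\ell,0}(\xi)\, \partial_1 \g^{(\LL)}(0,\xi) + \B_{\ell,0}(\xi)\, \partial_2 \g^{(\LL)}(0,\xi),
\end{equation*}
together with the Fa\`a di Bruno expansion \eqref{eq:FaaBruno} applied to the scalar function $\g^{(\LL)}$ in place of $\ab{\Sigma}^{(\LL)}$. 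The key algebraic observation is that after substituting $\eta_\ell = -\alpha^{(\LL)}\A_{\ell,0}$ and $\theta_\ell = -\alpha^{(\LL)}\B_{\ell,0}$ from \eqref{eq:etatheta}, the terms involving $\A_{\ell,0}$ and $\B_{\ell,0}$ inside $\alpha^{(\LL)}\omega_\ell$ cancel against $\eta_\ell\,\partial_1\g^{(\LL)}$ and $\theta_\ell\,\partial_2\g^{(\LL)}$, so the equation collapses to
\begin{equation*}
\alpha^{(\LL)}(\xi)\Big(\partial_1^\ell \g^{(\RR)}(0,\xi) - \sum_{|\bfm{\sigma}|=1}^{\ell} \AS_{\bfm{\sigma};\ell}(\xi)\, \partial_1^{\sigma_1}\partial_2^{\sigma_2}\g^{(\LL)}(0,\xi)\Big) = 0.
\end{equation*}
Dividing by $\alpha^{(\LL)} \ne 0$ leaves exactly the $C^\ell$ trace-matching identity $\partial_1^\ell \g^{(\RR)}(0,\xi) = \partial_1^\ell(\g^{(\LL)}\circ\Phi)(0,\xi)$, which is the honest statement one wants. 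So I have reduced the problem to showing: given the lower-order identities, the relation $\partial_1^\ell \g^{(\RR)}(0,\xi) = \partial_1^\ell(\g^{(\LL)}\circ\Phi)(0,\xi)$ is equivalent to $\g_\ell^{(\LL)}(\xi) = \g_\ell^{(\RR)}(\xi)$.

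The heart of the argument, and the step I expect to be the main obstacle, is a bookkeeping identity expressing $\partial_1^\ell(\g^{(\Side)}\circ\Phi)(0,\xi)$ purely in terms of the quantities $\g_0^{(\Side)},\ldots,\g_\ell^{(\Side)}$ and $\partial^{\,k}\gC_i$. Concretely, using $\A_{1,0} = \alpha^{(\RR)}/\alpha^{(\LL)}$, $\B_{1,0} = \beta/\alpha^{(\LL)}$, Theorem~\ref{thm1}'s formulas $\A_{\rho,0} = \vartheta\, c_\rho$, $\B_{\rho,0} = \mu\, c_\rho$ for $\rho \ge 2$, and the definition $\beta = \alpha^{(\LL)}\beta^{(\RR)} - \alpha^{(\RR)}\beta^{(\LL)}$ from \eqref{eq:splitting_beta}, one rearranges the Fa\`a di Bruno sum so that the contribution of the pure $\rho=1$ terms produces $(\alpha^{(\RR)}/\alpha^{(\LL)})^\ell\,\partial_1^\ell\g^{(\LL)}$ plus lower-order derivatives weighted by powers of $\beta^{(\Side)}/\alpha^{(\Side)}$, while the terms with some $\rho \ge 2$ reorganize — via the structure of $c_\rho$ and $\vartheta,\mu$ — into derivatives $\partial^{\,\ell-i}$ of the lower-order $\g_i$. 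The cleanest way to manage this is to prove by a secondary induction the identity
\begin{equation*}
\partial_1^\ell\big(\g^{(\Side)}\circ\Phi\big)(0,\xi) = \big(\alpha^{(\Side)}(\xi)\big)^{\ell}\Big(\g_\ell^{(\Side)}(\xi) + \sum_{i=0}^{\ell-1}\binom{\ell}{i}\Big(\tfrac{\beta^{(\Side)}(\xi)}{\alpha^{(\Side)}(\xi)}\Big)^{\ell-i}\dd^{\ell-i}\gC_i(\xi)\Big) \cdot (\text{appropriate normalization}),
\end{equation*}
i.e. essentially that $\g_\ell^{(\Side)}$ as defined in \eqref{eq:gC2} is precisely the ``$\ell$-th transversal jet of $\g^{(\Side)}$ expressed in the reparameterized coordinate,'' so that the $C^\ell$ condition $\partial_1^\ell\g^{(\RR)} = \partial_1^\ell(\g^{(\LL)}\circ\Phi)$ becomes $\g_\ell^{(\RR)} = \g_\ell^{(\LL)}$ after cancelling the common terms $\sum_i \binom{\ell}{i}(\cdots)^{\ell-i}\dd^{\ell-i}\gC_i$ (common because $\gC_0,\ldots,\gC_{\ell-1}$ are shared by hypothesis). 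Verifying this identity is where the Narayana-number closed forms and the generalized Fa\`a di Bruno combinatorics all have to fit together consistently; I would check it first for $\ell=1,2,3$ against the explicit formulas in the Examples to be sure the normalization constants and index ranges are right, and then push the general induction. Once the identity is in hand, the equivalence — and hence the theorem — follows immediately by running the induction on $\ell$ from $0$ up to $\sm$.
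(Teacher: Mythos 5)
Your overall framework is the paper's: reduce, via Proposition~\ref{prop:Cs} and the substitution $\eta_\ell=-\alpha^{(\LL)}\A_{\ell,0}$, $\theta_\ell=-\alpha^{(\LL)}\B_{\ell,0}$, to the trace-matching conditions $\partial_1^\ell\g^{(\RR)}(0,\xi)=\partial_1^\ell(\g^{(\LL)}\circ\Phi)(0,\xi)$, then argue by induction on $\ell$ using the Fa\`a di Bruno expansion and the recursion of Theorem~\ref{thm1}. The gap is that the step you yourself call the main obstacle is the entire substance of the proof, and you neither prove it nor state it correctly. As written, your key identity cannot hold: with the same index $\Side$ on both sides, the bracket $(\alpha^{(\Side)})^{\ell}\bigl(\g_\ell^{(\Side)}+\sum_{i=0}^{\ell-1}\binom{\ell}{i}(\beta^{(\Side)}/\alpha^{(\Side)})^{\ell-i}\dd^{\ell-i}\gC_i\bigr)$ is, by the definition \eqref{eq:gC2}, nothing but $\partial_1^\ell\g^{(\Side)}(0,\xi)$, which is not $\partial_1^\ell(\g^{(\Side)}\circ\Phi)(0,\xi)$; moreover for $\Side=\RR$ the composition $\g^{(\RR)}\circ\Phi$ never occurs, since $\Phi$ reparameterizes the $\LL$-patch. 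No overall ``normalization'' factor can repair this, because what must change is structural: the correct statement is the cross-patch identity that, under the induction hypothesis (and its $\partial_2$-derivatives, which you do not mention but which are indispensable for handling the mixed partials $\partial_1^{\sigma_1}\partial_2^{\sigma_2}\g^{(\LL)}$ generated by Fa\`a di Bruno), one has $\partial_1^\ell(\g^{(\LL)}\circ\Phi)(0,\xi)=(\alpha^{(\RR)})^{\ell}\g_\ell^{(\LL)}+\sum_{i=0}^{\ell-1}\binom{\ell}{i}(\beta^{(\RR)})^{\ell-i}(\alpha^{(\RR)})^{i}\dd^{\ell-i}\gC_i$, i.e.\ the \emph{right}-patch weights appear in front of the left-patch data. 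Only then do the $\gC_i$-terms cancel against those of $\partial_1^\ell\g^{(\RR)}(0,\xi)$; the terms you call ``common'' carry the weight $\beta^{(\LL)}/\alpha^{(\LL)}$ on one side and $\beta^{(\RR)}/\alpha^{(\RR)}$ on the other, so the cancellation you invoke is exactly what has to be proved, not a triviality.

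Proving this is a genuinely nontrivial computation, not bookkeeping to be ``pushed'' after checking $\ell\le 3$. In the paper it takes the form: write the condition as $\partial_1^\sm\g^{(\RR)}/(\alpha^{(\RR)})^{\sm}-\partial_1^\sm\g^{(\LL)}/(\alpha^{(\LL)})^{\sm}-\sum_{i,j}\Coeff^{\sm}_{i,j}\,\dd^{j}\gC_i=0$ and establish the coefficient identities \eqref{eq:Cf1}--\eqref{eq:Cf2}, namely $\Coeff^{\sm}_{i,\sm-i}={\sm\choose i}\bigl((\beta^{(\RR)}/\alpha^{(\RR)})^{\sm-i}-(\beta^{(\LL)}/\alpha^{(\LL)})^{\sm-i}\bigr)$ and $\Coeff^{\sm}_{i,j}=0$ for $j<\sm-i$. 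The first needs the binomial theorem together with $\A_{1,0}\beta^{(\LL)}+\B_{1,0}=\beta^{(\RR)}$; the second requires applying the recursion of Theorem~\ref{thm1} repeatedly ($\sm-(i+j)$ times) and recognizing that the resulting expression contains the factor $(\alpha^{(\RR)})^2\beta^{(\LL)}-\alpha^{(\LL)}\alpha^{(\RR)}\beta^{(\RR)}+(\alpha^{(\LL)})^2\A_{1,0}\B_{1,0}$, which vanishes identically by \eqref{eq:splitting_beta}. Nothing in your proposal supplies this argument or a substitute for it, so as it stands the proof is incomplete at its decisive point.
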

\begin{proof}
  It directly follows from Proposition~\ref{prop:Cs} that an isogeometric function $\phi \in \mathcal{V}$ belongs to the space 
  $\mathcal{V}^{\sm}$ if and only if the 
  associated spline functions $\g^{(\LL)}$ and 
  $\g^{(\RR)}$ fulfill the equation
  \begin{equation} \label{eq:Cs_modified}
 \left(\alpha^{(\RR)}(\xi)\right)^{-\ell}  \, \left( \partial_1^\ell \g^{(\RR)}(0,\xi)  -  \partial_1^\ell \left(\g^{(\LL)}\circ 
 \Phi \right)(0,\xi)\right) = 0  , \quad \xi \in [0,1],
  \end{equation}
  for $\ell = 0,1,\ldots,\sm$. We will prove the equivalence of the equations~\eqref{eq:gC} and \eqref{eq:Cs_modified} for any 
  $\ell =0,1, \ldots, \sm$, by means of induction on~$\ell$. The equivalence of both equations trivially holds for $\ell=0$ and can be directly 
  obtained for $\ell =1$ by applying \eqref{eq:a10b10} in equation~\eqref{eq:Cs_modified}.
  We will assume now that the equivalence of the two equations~\eqref{eq:gC} and \eqref{eq:Cs_modified} holds for all $\ell \leq \sm-1$, 
  and we will show it for $\ell=\sm$. 
 Using the induction assumption, equation \eqref{eq:gC2} implies that
 \begin{equation}   \label{eq:partial1gL_IA}
  \partial_1^\ell \g^{(\LL)}(0,\xi) =  \sum_{i=0}^{\ell} {\ell \choose i} 
  \left(\beta^{(\LL)}(\xi)\right)^{\ell-i} \left(\alpha^{(\LL)}(\xi)\right)^{i}  \dd^{\ell-i} \gC_i^{}(\xi),
  \quad 1\leq \ell \leq \sm-1.
 \end{equation}
Further,
differentiating \eqref{eq:partial1gL_IA} with respect to the second 
argument yields
 \begin{equation}  \label{eq:partial1partial2gL_IA}
 \partial_2 ^j  \, \partial_1^\ell \g^{(\LL)}(0,\xi) =  \sum_{i=0}^{\ell}  \sum_{\rho=0}^j  {\ell \choose i} {j \choose \rho} 
 \, \dd^{j-\rho} \left( \left(\beta^{(\LL)}(\xi)\right)^{\ell-i} \left(\alpha^{(\LL)}(\xi)\right)^{i}\right)^{}  \dd^{\ell-i+\rho}\gC_i^{}(\xi),
 \end{equation}
 for $1\leq \ell \leq \sm-1$ and $j\geq 0$. 
 In the following, we will skip the arguments in order to simplify the expressions. Using \eqref{eq:FaaBruno} and \eqref{eq:partial1partial2gL_IA}, 
 equation~\eqref{eq:Cs_modified} is equivalent to
 \begin{align}
  &  0= \frac{\partial_1^\sm \g^{(\RR)}}{ (\alpha^{(\RR)})^{\sm}} - \frac{\partial_1^\sm \g^{(\LL)}}{ (\alpha^{(\LL)})^{\sm}} - 
  \sum_{|\bfm{\sigma}|  
  = 1 \atop \sigma_1 < \sm }^{\sm} \frac{\AS_{\bfm{\sigma};\sm}}{ (\alpha^{(\RR)})^{\sm}} \, 
   \sum_{i=0}^{\sigma_1}  \sum_{\rho=0 
   }^{\sigma_2}  {\sigma_1 \choose i} {\sigma_2 \choose \rho} 
 \,\dd^{\sigma_2-\rho} \left( \left(\beta^{(\LL)}\right)^{\sigma_1-i} \left(\alpha^{(\LL)}\right)^{i}\right)^{}  \dd^{\sigma_1-i+\rho} \gC_i^{} 
 \nonumber \\
  & = \frac{\partial_1^\sm \g^{(\RR)}}{ (\alpha^{(\RR)})^{\sm}} - \frac{\partial_1^\sm \g^{(\LL)}}{ (\alpha^{(\LL)})^{\sm}} - 
  \sum_{\sigma_1  = 0 }^{\sm-1} \sum_{\sigma_2=0}^{\sm-\sigma_1} \frac{\AS_{\bfm{\sigma};\sm}}{ (\alpha^{(\RR)})^{\sm}}  \, 
   \sum_{i=0}^{\sigma_1}  \sum_{\rho=0 
   }^{\sigma_2}  {\sigma_1 \choose i} {\sigma_2 \choose \rho} 
 \,\dd^{\sigma_2-\rho} \left( \left(\beta^{(\LL)}\right)^{\sigma_1-i} \left(\alpha^{(\LL)}\right)^{i}\right)^{}  \dd^{\sigma_1-i+\rho}\gC_i^{} 
 \nonumber \\
  & = \frac{\partial_1^\sm \g^{(\RR)}}{ (\alpha^{(\RR)})^{\sm}} - \frac{\partial_1^\sm \g^{(\LL)}}{ (\alpha^{(\LL)})^{\sm}} - 
  \sum_{i=0}^{\sm - 1}  \sum_{\sigma_1  = i }^{\sm-1} \sum_{\sigma_2=0}^{\sm-\sigma_1}   \sum_{\rho=0 
   }^{\sigma_2} \frac{\AS_{\bfm{\sigma};\sm}}{ (\alpha^{(\RR)})^{\sm}}  \,  {\sigma_1 \choose i} {\sigma_2 \choose \rho} 
 \,\dd^{\sigma_2-\rho} \left( \left(\beta^{(\LL)}\right)^{\sigma_1-i} \left(\alpha^{(\LL)}\right)^{i}\right)^{} \dd^{\sigma_1-i+\rho} \gC_i^{} .
 \label{eq:Cs_modified2} 
 \end{align}
 It is straightforward to see that $\sigma_1-i+\rho \leq s-i$. Therefore, we can write equation~\eqref{eq:Cs_modified2} also as
\begin{equation}  \label{eq:esCinu}
 0 =  \frac{\partial_1^\sm \g^{(\RR)}}{ (\alpha^{(\RR)})^{\sm}} - \frac{\partial_1^\sm \g^{(\LL)}}{ (\alpha^{(\LL)})^{\sm}} - 
  \sum_{i=0}^{\sm - 1}  \sum_{j  = 0 }^{\sm-i}   \Coeff_{i,j}^\sm  \, \dd^{j} \gC_i^{},
\end{equation}
where $\Coeff_{i,j}^\sm$ are univariate functions.
  In order to prove the theorem,  which means now to demonstrate the equivalence of the equations~\eqref{eq:gC} and \eqref{eq:esCinu}, it 
  remains to show that these functions are given as
  \begin{equation}
     \Coeff_{i,\sm-i}^\sm  = {\sm \choose i}  \left(\left(\frac{\beta^{(\RR)}}{\alpha^{(\RR)}}\right)^{\sm-i} - 
     \left(\frac{\beta^{(\LL)}}{\alpha^{(\LL)}}\right)^{\sm-i}\right),  \quad i=0,1,\ldots,\sm-1, \label{eq:Cf1}
  \end{equation}
  and
  \begin{equation}
     \Coeff_{i,j}^\sm  = 0,\quad j = 0,1,\ldots,\sm-i-1,  \quad i=0,1,\ldots,\sm-1. \label{eq:Cf2}
 \end{equation}
 
We will first consider the case $i+j = \sm$, and will hence prove formula \eqref{eq:Cf1}. Since $\rho = \sm - \sigma_1$, 
$\rho \leq \sigma_2$ and $| \bfm{\sigma}|\leq \sm$, it follows that $\rho=\sigma_2=s-\sigma_1$. Then, we have
\begin{align*}
 \Coeff_{i,\sm-i}^\sm & = \sum_{\sigma_1 = i}^{\sm-1}
 \frac{\AS_{(\sigma_1,\sm-\sigma_1);\sm}}{ (\alpha^{(\RR)})^{\sm}} 
     {\sigma_1 \choose i} \left(\beta^{(\LL)}\right)^{\sigma_1-i} \left(\alpha^{(\LL)}\right)^{i}  \\
     & = \frac{1}{ (\alpha^{(\RR)})^{\sm}}  \sum_{\sigma_1 = i}^{\sm-1}
 {\sm \choose \sigma_1} {\sigma_1 \choose i} \A_{1,0}^{\sigma_1} \, \B_{1,0}^{\sm-\sigma_1} \, \left(\beta^{(\LL)}\right)^{\sigma_1-i} 
 \left(\alpha^{(\LL)}\right)^{i} \\
 & = \frac{1}{ (\alpha^{(\RR)})^{\sm}}  \left(\frac{\alpha^{(\LL)}}{\beta^{(\LL)}}\right)^{i} \; \sum_{j=0}^{\sm-i-1} {\sm \choose j+i} 
 {j+i \choose i} \left(\A_{1,0} \beta^{(\LL)} \right)^{j+i} \, \B_{1,0}^{\sm-(i+j)} \\
 & = \frac{1}{ (\alpha^{(\RR)})^{\sm}}  \left(\frac{\alpha^{(\LL)}}{\beta^{(\LL)}}\right)^{i} \; \sum_{j=0}^{\sm-i-1} {\sm-i \choose j} 
 {\sm \choose i} \left(\A_{1,0} \beta^{(\LL)} \right)^{j+i} \, \B_{1,0}^{\sm-(i+j)} \\
  & = \frac{1}{ (\alpha^{(\RR)})^{\sm}}  \left(\frac{\alpha^{(\LL)}}{\beta^{(\LL)}}\right)^{i} \left(\A_{1,0} \beta^{(\LL)} \right)^{i}  {\sm \choose i} 
 \left(  \sum_{j=0}^{\sm-i} {\sm-i \choose j} \left(\A_{1,0} \beta^{(\LL)} \right)^{j} \, \B_{1,0}^{(\sm-i)-j} -  \left(\A_{1,0} \beta^{(\LL)} \right)^{\sm-i}\right) \\
 & = \frac{1}{ (\alpha^{(\RR)})^{\sm}}  \left(\frac{\alpha^{(\LL)}}{\beta^{(\LL)}}\right)^{i} \left(\A_{1,0} \beta^{(\LL)} \right)^{i}  {\sm \choose i} 
 \left(  \left(\A_{1,0} \beta^{(\LL)} +\B_{1,0}\right)^{\sm-i}  -  \left(\A_{1,0} \beta^{(\LL)} \right)^{\sm-i}\right).
\end{align*}
Due to $\A_{1,0} \beta^{(\LL)}  = \alpha^{(\RR)} \beta^{(\LL)} \left(\alpha^{(\LL)}\right)^{-1} $ and 
$\A_{1,0} \beta^{(\LL)} +\B_{1,0} = \beta^{(\RR)} $, we further obtain
\begin{equation*}
 \Coeff_{i,\sm-i}^\sm  =  {\sm \choose i}  \frac{1}{ (\alpha^{(\RR)})^{\sm-i}}  \left( 
  \left( \beta^{(\RR)}\right)^{\sm-i} - \left(  \frac{\alpha^{(\RR)} \beta^{(\LL)}}{\alpha^{(\LL)}}\right)^{\sm-i}  \right) = {\sm \choose i}  \left( \left(  \frac{\beta^{(\RR)}}{ \alpha^{(\RR)}}\right)^{\sm-i} -  \left(\frac{\beta^{(\LL)}}{ 
  \alpha^{(\LL)}}\right)^{\sm-i} \right) .
 \end{equation*}
 
Let us now consider \eqref{eq:Cf2}. By fixing $i$ and $j \in \{0,1,\ldots,\sm-i-1 \}$, we obtain $\rho = i+j -\sigma_1$. 
Since $\rho \geq 0$, we further get $\sigma_1 \leq i+j \leq s-1$, and we have
\begin{align}  \label{eq:C_fi_general}
 \Coeff_{i,j}^\sm & =  \frac{1}{ (\alpha^{(\RR)})^{\sm}} \sum_{\sigma_1=i}^{i+j} \; \sum_{\sigma_2 = i+j-\sigma_1}^{s-\sigma_1}
\AS_{\bfm{\sigma};\sm}  \,  {\sigma_1 \choose i} {\sigma_2 \choose i+j-\sigma_1} 
  \dd^{\sigma_2 - ( i+j - \sigma_1)} \left( \left(\beta^{(\LL)}\right)^{\sigma_1-i} \left(\alpha^{(\LL)}\right)^{i}\right)^{} \nonumber \\
  = & \frac{1}{ (\alpha^{(\RR)})^{\sm}} \sum_{\mu_1=0}^{j} \; \sum_{\mu_2 = 0}^{s - (i+j)}
\AS_{(\mu_1+i, \mu_2+(j-\mu_1));\sm}  \,  {\mu_1 +i \choose i} {\mu_2 + (j-\mu_1)\choose j-\mu_1} 
  \dd^{\,\mu_2}\left( \left(\beta^{(\LL)}\right)^{\mu_1} \left(\alpha^{(\LL)}\right)^{i}\right)^{} . 
 \end{align}
 By using the recursion in Theorem~\ref{thm1} step by step $\sm-(i+j)$ times, expression \eqref{eq:C_fi_general} simplifies to
\begin{align}
    \Coeff_{i,j}^\sm   = & \, 
    \frac{  (-1)^{\sm - (i + j)} \sm (\sm -1-i)!  {\sm-1 \choose i}  \left(\beta^{(\RR)}\right)^{j-1} \left(\alpha^{(\RR)}\right)^{i-1} 
    }{ j! \left(\alpha^{(\RR)}\right)^{\sm}\left(\alpha^{(\LL)}\right)^{2(\sm-(i+j))}} \left( i \,\beta^{(\RR)} \left(\alpha^{(\LL)}\right)'+j\, 
    \alpha^{(\RR)} \left(\beta^{(\LL)}\right)' \right) \nonumber \\ 
    & \cdot \left( \alpha^{(\LL)} \beta^{(\RR)} \left(\alpha^{(\LL)}\right)' + \alpha^{(\RR)} \left(\alpha^{(\LL)} \left(\beta^{(\LL)}\right)' - 
    2 \beta^{(\LL)} \left(\alpha^{(\LL)}\right)' \right) \right)^{\sm-1-(i+j)} \label{eq:factor} \\ 
    & \cdot 
    \left( \left(\alpha^{(\RR)} \right)^2 \beta^{(\LL)} - \alpha^{(\LL)} \alpha^{(\RR)} \beta^{(\RR)} +
    \left(\alpha^{(\LL)} \right)^2 \A_{1,0} \,  \B_{1,0} \right). \nonumber
\end{align}
Since the last factor of \eqref{eq:factor} is equal to zero, i.e.
$$
 \left(\alpha^{(\RR)} \right)^2 \beta^{(\LL)} - \alpha^{(\LL)} \alpha^{(\RR)} \beta^{(\RR)} +
    \left(\alpha^{(\LL)} \right)^2 \A_{1,0} \,  \B_{1,0} = 
    - \alpha^{(\RR)}  \left( \alpha^{(\LL)} \beta^{(\RR)} - \alpha^{(\RR)}  \beta^{(\LL)}\right) +
    \alpha^{(\RR)} \beta = 0,
$$
relation \eqref{eq:Cf2} holds. Employing \eqref{eq:Cf1} and \eqref{eq:Cf2}, we can now simplify equation \eqref{eq:esCinu} to
 $$
 \frac{\partial_1^\sm \g^{(\RR)}}{ (\alpha^{(\RR)})^{\sm}} - \frac{\partial_1^\sm \g^{(\LL)}}{ (\alpha^{(\LL)})^{\sm}} - 
  \sum_{i=0}^{\sm - 1}  {\sm \choose i} \left(\frac{\beta^{(\RR)}}{\alpha^{(\RR)}}\right)^{\sm-i} 
  \hskip-0.6em 
  \dd^{\sm-i} \gC_i^{} +  \sum_{i=0}^{\sm - 1} 
      {\sm \choose i} \left(\frac{\beta^{(\LL)}}{\alpha^{(\LL)}}\right)^{\sm-i}  \hskip-0.6em \dd^{\sm-i} \gC_i^{}  =0,
 $$
 which is equivalent to equation~\eqref{eq:gC}, and which finally concludes the proof.
 \end{proof}

The $C^s$-smooth isogeometric spline space~$\mathcal{V}^{\sm}$ over a bilinearly parameterized two-patch domain $\overline{\Omega} = 
\overline{\Omega^{(\LL)}} \cup \overline{\Omega^{(\RR)}}$ can be decomposed into the direct sum of three subspaces, namely
$$
\mathcal{V}^{\sm} = \mathcal{V}_{\Omega^{(\LL)}}^{\sm} \oplus \mathcal{V}_{\Omega^{(\RR)}}^{\sm}  \oplus \mathcal{V}_{\Gamma}^{\sm},
$$
where the subspaces~$\mathcal{V}_{\Omega^{(\Side)}}^{\sm}$, $\Side \in \{\LL,\RR\}$, and $\mathcal{V}_{\Gamma}^{\sm}$ are given by
\[
 \mathcal{V}_{\Omega^{(\Side)}}^{\sm} =  \left\{ \phi \in \mathcal{V}^{\sm} \; | \; \g^{(\Side)}(\xi_1,\xi_2)= \sum_{j_1=\sm+1}^{n-1}\sum_{j_2=0}^{n-1} 
 \coe^{(\Side)}_{j_1,j_2} N_{j_1,j_2}^{\ab{p},\ab{r}}(\xi_1,\xi_2), \mbox{ } \g^{(\widetilde{\Side})}(\xi_1,\xi_2)=0, 
 \widetilde{\Side} \neq \Side \right\}
\]
and
\[
 \mathcal{V}_{\Gamma}^{\sm} = \left\{\phi \in \mathcal{V}^{\sm} \; | \; \g^{(\Side)}(\xi_1,\xi_2)= \sum_{j_1=0}^{\sm}\sum_{j_2=0}^{n-1} 
 \coe^{(\Side)}_{j_1,j_2} N_{j_1,j_2}^{\ab{p},\ab{r}}(\xi_1,\xi_2), \mbox{ } \Side \in \{\LL,\RR \} 
\right\},
\]
respectively. The subspaces~$\mathcal{V}_{\Omega^{(\Side)}}^{\sm}$, $\Side \in \{\LL,\RR\}$, can be simply described as
\begin{align*}
\mathcal{V}^{\sm}_{\Omega^{(\Side)}} &= \Span \left\{ \phi_{\Omega^{(\Side)}; j_1,j_2} |\; j_1=\sm+1,\ldots,n-1,\; j_2=0,1,\ldots, n-1 \right\},\quad \Side 
\in \{\LL,\RR\},
\end{align*}
 with the functions
\begin{equation}  \label{eq:PhiOmega}
{\phi}_{\Omega^{(\Side)};  j_1,j_2}(\bfm{x})  = 
\begin{cases}
   (N_{  j_1,j_2}^{\bfm{p},\bfm{r}}\circ (\ab{F}^{(\Side)})^{-1})(\bfm{x}) \;
\mbox{ if }\f \, \bfm{x} \in \overline{\Omega^{(\Side)}},
\\ 0 \quad \mbox{ if }\f \, \bfm{x} \in \overline{\Omega} \backslash \overline{\Omega^{(\Side)}}.
\end{cases} 
\end{equation}
Since the functions $\phi_{\Omega^{(\Side)}; j_1,j_2}$, $j_1=\sm+1,\ldots,n-1$, $j_2=0,1,\ldots, n-1$, are just ``standard'' isogeometric spline 
functions of at least $C^s$-continuity,
they are linearly independent, and therefore form a basis of the space~$\mathcal{V}^{\sm}_{\Omega^{(\Side)}}$.
The following theorem specifies now an explicit representation of an isogeometric function~$\phi \in \mathcal{V}^{\sm}_{\Gamma}$.
\begin{theorem} \label{lem:explicit_second}
 Let $\phi \in \mathcal{V}^{s}_{\Gamma}$, then the two associated spline functions $\g^{(\Side)} = \phi \circ \ab{F}^{(\Side)}$, 
 $\Side \in \{\LL,\RR \}$, can be represented as
 \begin{equation}  \label{eq:presentation}
 \g^{(\Side)}(\xi_1,\xi_2) = \sum_{i=0}^{\sm} \left( \sum_{j=0}^i {i \choose j} \left( \beta^{(\Side)}(\xi_2)\right)^{i-j} 
 \left( \alpha^{(\Side)}(\xi_2)\right)^{j} \dd^{i-j}\gC_j^{}(\xi_2)  \right) M_i^{p,r}(\xi_1)
\end{equation}
where the functions $\gC_{j}$ are defined in \eqref{eq:gC} and \eqref{eq:gC2}, and the functions $M_{i}^{p,r}$ are given as
\begin{equation*}  \label{eq:M012}
  M_i^{p,r}(\xi) = \sum_{j=i}^\sm \frac{ {j \choose i} h^i}{\prod_{\ell=0}^{i-1} (p-\ell)} N_j^{p,r}(\xi) , \quad i = 0,1,\ldots,\sm.
\end{equation*}
\end{theorem}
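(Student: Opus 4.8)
The plan is to prove \eqref{eq:presentation} by showing that the proposed right-hand side is (i) a legitimate element of the space $\mathcal{S}_h^{\ab{p},\ab{r}}([0,1]^2)$ for each $\tau$, (ii) satisfies $\g^{(\Side)}(\xi_1,\xi_2)=\sum_{j_1=0}^{\sm}\sum_{j_2=0}^{n-1}\coe^{(\Side)}_{j_1,j_2}N_{j_1,j_2}^{\ab{p},\ab{r}}(\xi_1,\xi_2)$ as required in the definition of $\mathcal{V}_\Gamma^{\sm}$, and (iii) realizes exactly the $C^{\sm}$-smoothness characterization of Theorem~\ref{thm:g_ell}, namely $\g_\ell^{(\LL)}=\g_\ell^{(\RR)}=\gC_\ell$ for $\ell=0,1,\dots,\sm$. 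Conversely, every $\phi\in\mathcal{V}_\Gamma^{\sm}$ must have this form, because the definition of $\mathcal{V}_\Gamma^{\sm}$ forces $\g^{(\Side)}$ to lie in the span of $N_{j_1,j_2}^{\ab{p},\ab{r}}$ with $j_1\le\sm$, and then the $\sm+1$ trace functions $\partial_1^\ell\g^{(\Side)}(0,\cdot)$, $\ell=0,\dots,\sm$, together with the ``boundary mass/derivative'' functions $M_i^{p,r}$ determine $\g^{(\Side)}$ uniquely.

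First I would verify the key algebraic identity: for the $M_i^{p,r}$ as defined, one has
\begin{equation*}
\partial_1^\ell \left( \sum_{i=0}^{\sm} g_i(\xi_2)\, M_i^{p,r}(\xi_1) \right)\Big|_{\xi_1=0} = g_\ell(\xi_2), \quad \ell=0,1,\ldots,\sm,
\end{equation*}
i.e.\ the $M_i^{p,r}$ form a ``Hermite-type'' dual basis at $\xi_1=0$ for derivatives up to order $\sm$. This follows from the well-known derivative formula for B-splines at the left endpoint of an open knot vector: $N_j^{p,r}$ and its derivatives at $0$ are supported on $j\le p$, and the triangular coefficient structure $\frac{{j\choose i}h^i}{\prod_{\ell=0}^{i-1}(p-\ell)}$ is precisely arranged so that $\partial_1^\ell M_i^{p,r}(0)=\delta_{i,\ell}$ while $M_i^{p,r}$ uses only B-splines $N_j^{p,r}$ with $i\le j\le\sm$ (so each $M_i^{p,r}$ only involves the first $\sm+1$ B-splines, keeping $\g^{(\Side)}$ in the correct subspace; here the assumption $r\ge\sm$ guarantees $N_0^{p,r},\dots,N_{\sm}^{p,r}$ have the required independent endpoint jets). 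Granting this, applying $\partial_1^\ell|_{\xi_1=0}$ to \eqref{eq:presentation} immediately gives
\begin{equation*}
\partial_1^\ell \g^{(\Side)}(0,\xi_2)=\sum_{j=0}^{\ell}{\ell\choose j}\left(\beta^{(\Side)}(\xi_2)\right)^{\ell-j}\left(\alpha^{(\Side)}(\xi_2)\right)^{j}\dd^{\ell-j}\gC_j(\xi_2),
\end{equation*}
which is exactly equation~\eqref{eq:partial1gL_IA} (now for both $\tau$ and all $\ell\le\sm$). Substituting this into \eqref{eq:gC2} and reindexing the double sum, the terms involving $\gC_i$ with $i<\ell$ cancel against the correction sum $\sum_{i=0}^{\ell-1}{\ell\choose i}(\beta^{(\Side)}/\alpha^{(\Side)})^{\ell-i}\dd^{\ell-i}\gC_i$, leaving $\g_\ell^{(\Side)}(\xi_2)=\gC_\ell(\xi_2)$, independent of $\tau$; so the smoothness condition of Theorem~\ref{thm:g_ell} is satisfied, and $\phi\in\mathcal{V}^{\sm}$.

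For the converse, given $\phi\in\mathcal{V}_\Gamma^{\sm}$, I would note that $\g^{(\Side)}\in\Span\{N_{j_1,j_2}^{\ab{p},\ab{r}}:0\le j_1\le\sm\}$ means $\g^{(\Side)}(\xi_1,\xi_2)=\sum_{i=0}^{\sm}g_i^{(\Side)}(\xi_2)\,\widetilde M_i(\xi_1)$ for \emph{some} basis $\{\widetilde M_i\}$ of $\Span\{N_0^{p,r},\dots,N_{\sm}^{p,r}\}$; choosing the Hermite-dual basis $M_i^{p,r}$ forces $g_i^{(\Side)}=\partial_1^i\g^{(\Side)}(0,\cdot)$. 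By Theorem~\ref{thm:g_ell} (applied inductively, exactly as in its proof) these traces must equal $\sum_{j=0}^{i}{i\choose j}(\beta^{(\Side)})^{i-j}(\alpha^{(\Side)})^{j}\dd^{i-j}\gC_j$ with $\gC_j=\g_j^{(\LL)}=\g_j^{(\RR)}$, giving \eqref{eq:presentation}. The main obstacle I anticipate is the endpoint-derivative bookkeeping for the B-splines — establishing the dual-basis property $\partial_1^\ell M_i^{p,r}(0)=\delta_{i,\ell}$ from the explicit coefficients while simultaneously checking that no B-spline of index $>\sm$ is needed (which relies on $r\ge\sm$, ensuring the endpoint jets of $N_0^{p,r},\dots,N_{\sm}^{p,r}$ are "triangular" and invertible); once that combinatorial identity is in hand, the rest is the reindexing already carried out in the proof of Theorem~\ref{thm:g_ell}.
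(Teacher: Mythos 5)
Your proposal is correct and follows essentially the same route as the paper's proof: both identify the interface jets $\partial_1^\ell \g^{(\Side)}(0,\cdot)$ via Theorem~\ref{thm:g_ell} and expand $\g^{(\Side)}$ in the Hermite-type dual basis $M_i^{p,r}$ of $\Span\{N_0^{p,r},\dots,N_\sm^{p,r}\}$ at $\xi_1=0$, the paper obtaining the coefficients $\lambda_{i,j}$ by explicitly solving the endpoint-derivative system $\dd^{\ell}M_i^{p,r}(0)=\delta_{i,\ell}$ that you defer as the remaining bookkeeping. One small correction: the clean formula for the endpoint derivatives (and hence the explicit $\lambda_{i,j}$ involving only $h$ and $p$) rests on the inner-knot multiplicity $p-r\ge \sm+1$, i.e.\ $r\le p-(\sm+1)$, not on $r\ge\sm$ as your parenthetical suggests; the triangularity of the endpoint jets itself holds for any open knot vector.
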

\begin{proof}
By means of the Taylor expansion of $\g^{(\Side)}(\xi_1,\xi_2)$ at $(\xi_1,\xi_2)=(0,\xi_2)$, and due to Theorem~\ref{thm:g_ell},
we obtain that
\begin{align*}  
   \g^{(\Side)}(\xi_1,\xi_2) = & \; \g^{(\Side)}(0,\xi_2) + \partial_1 \g^{(\Side)} (0,\xi_2) \xi_1  + \partial_1^2 \g^{(\Side)} (0,\xi_2) 
   \frac{\xi_1^2}{2} + \ldots + \partial_1^\sm \g^{(\Side)} (0,\xi_2) \frac{\xi_1^\sm}{\sm !} + \mathcal{O}(\xi_1^{\sm+1})  \nonumber \\
  = &\; \gC_0(\xi_2)  + \left( \alpha^{(\Side)}(\xi_2) \gC_1(\xi_2) + \beta^{(\Side)}(\xi_2) \gC_0'(\xi_2)\right) \xi_1 + \nonumber \\
  &\left( (\alpha^{(\Side)})^2(\xi_2)\gC_2(\xi_2) +  2\beta^{(\Side)} \alpha^{(\Side)} (\xi_2) \gC_1'(\xi_2)  + (\beta^{(\Side)})^2(\xi_2) 
  \gC_0''(\xi_2)\right) \frac{\xi_1^2}{2}+ \ldots + \nonumber \\ 
  &\left( \sum_{j=0}^\sm {\sm \choose j} \left( \beta^{(\Side)}(\xi_2)\right)^{\sm-j} \left( \alpha^{(\Side)}(\xi_2)\right)^{j} 
  \dd^{\sm-j}\gC_j^{}(\xi_2) \right) \frac{\xi_1^\sm}{\sm!} + \mathcal{O}(\xi_1^{\sm+1}). \nonumber
\end{align*}
Using the fact that the functions $\g^{(\Side)}(\xi_1,\xi_2)$, $\Side \in \{\LL,\RR\}$, possess just a spline representation of the form
\[
 \g^{(\Side)}(\xi_1,\xi_2)= \sum_{j_1=0}^{\sm}\sum_{j_2=0}^{n-1} \coe^{(\Side)}_{j_1,j_2} N_{j_1,j_2}^{\ab{p},\ab{r}}(\xi_1,\xi_2),
\]
we further get
  \begin{align*}
 \g^{(\Side)}(\xi_1,\xi_2)  = & \; \gC_0(\xi_2) M_0^{p,r}(\xi_1) + \left( \alpha^{(\Side)}(\xi_2) \gC_1(\xi_2) + \beta^{(\Side)}(\xi_2) 
 \gC_0'(\xi_2)\right) M_1^{p,r}(\xi_1) + 
  \nonumber \\
  &\left( (\alpha^{(\Side)})^2(\xi_2)\gC_2(\xi_2) +  2\beta^{(\Side)} \alpha^{(\Side)} (\xi_2) \gC_1'(\xi_2)  + (\beta^{(\Side)})^2(\xi_2) 
  \gC_0''(\xi_2)\right) M_2^{p,r}(\xi_1)+ \ldots + \nonumber \\ 
  &\left( \sum_{j=0}^\sm {\sm \choose j} \left( \beta^{(\Side)}(\xi_2)\right)^{\sm-j} \left( \alpha^{(\Side)}(\xi_2)\right)^{j} \dd^{\sm-j}
  \gC_j^{}(\xi_2) \right) M_{\sm}^{p,r}(\xi_1),
\end{align*}
with
$$
  M_i^{p,r}(\xi_1) = \sum_{j=0}^\sm \lambda_{i,j} N_j^{p,r}(\xi_1), \quad i=0,1,\ldots,\sm.
$$
The unknown parameters $\lambda_{i,j}$, $i,j=0,1,\ldots,\sm$, are then determined by the conditions 
$
\dd^{\ell} M_i^{p,r}(0)=\delta_{i,\ell},
$
where $\delta_{i,\ell}$ is the Kronecker delta. By the properties of the B-splines $N_j^{p,r}$, we obtain for the unknowns $\lambda_{i,j}$ 
the following system of $2(\sm+1)$ 
equations
$$
 \dd^{\ell} M_i^{p,r}(0) = h^{-\ell} \left( \prod_{\rho=0}^{\ell-1} (p-\rho)\right) \sum_{\rho=0}^{\ell} (-1)^{\ell-\rho} {\ell \choose \rho} 
 \lambda_{i,\rho}, \quad i,j=0,1,\ldots,\sm,
$$ 
which possesses the solution
$$
\lambda_{i,0} = \lambda_{i,1} = \cdots = \lambda_{i,i-1} =0, \quad
\lambda_{i,j} = \frac{ {j \choose i} h^i }{\prod_{\rho = 0}^{i-1} (p-\rho)}, \; j \geq i. 
$$
\end{proof}
 
Any $(\sm+1)$-tuple of functions $(\gC_0, \gC_1,\ldots,\gC_\sm)$, which assures after inserting in \eqref{eq:presentation}, 
that the two functions $\g^{(\Side)} = \phi \circ \ab{F}^{(\Side)}$, $\Side \in \{\LL,\RR \}$, belong to the spline 
space~$\mathcal{S}_{h}^{\ab{p},\ab{r}}([0,1]^{2})$, defines an isogeometric function $\phi \in \mathcal{V}_\Gamma^{\sm}$. 
One can easily verify by means of representation~\eqref{eq:presentation}, that the $(\sm+1)$-tuples formed by
\begin{equation}  \label{eq:basisG}
(\gC_0,\gC_1,\ldots,\gC_\sm) = \left(\underbrace{0,0,\ldots,0}_{j_1}, N_{j_2}^{p-j_1,r+\sm-j_1},\underbrace{0,\ldots,0}_{\sm-j_1}\right) ,\; 
j_2=0,1,\ldots,n_{j_1}-1, \; j_1=0,1,\ldots,\sm,
\end{equation}
with $n_{j_1} = \dim (\mathcal{S}_{h}^{{p-j_1},{r+\sm-j_1}}([0,1]^{2})) = p+1-j_1+k(p-r-\sm+j_1)$,
yield isogeometric functions $\phi \in \mathcal{V}_\Gamma^{\sm}$. Let us denote these functions by $\phi_{\Gamma;j_1,j_2}$. However, 
linear combinations of the functions $\phi_{\Gamma;j_1,j_2}$ are not necessarily the only functions in $\mathcal{V}_\Gamma^{\sm}$,  and 
the construction of a basis for the space~$\mathcal{V}^{\sm}_{\Gamma}$ is now a very challenging task, which requires the study of a lot of 
different possible cases, cf.~\cite{KaSaTa17a} and \cite{KaVi17c} for $\sm=1$ and $\sm=2$, respectively. This is a direct consequence of the 
fact that the dimension of the space~$\mathcal{V}^{\sm}_{\Gamma}$ heavily depends on the parameterization of the initial geometry. Therefore, we 
consider instead of the entire space~$\mathcal{V}^{\sm}_{\Gamma}$ the subspace
$$
 \widetilde{\mathcal{W}}_\Gamma^{\sm} = \Span \left\{ \phi_{\Gamma; j_1,j_2} |\; j_1=0,1,\ldots,\sm, \; j_2=0,1,\ldots,n_{j_1} -1 \right\} 
 \subseteq \mathcal{V}_\Gamma^{\sm},
$$
which further leads to the subspace~$\mathcal{W}^{\sm} \subseteq \mathcal{V}^{\sm}$ given as
\[
\mathcal{W}^{\sm} = \mathcal{V}_{\Omega^{(\LL)}}^{\sm} \oplus \mathcal{V}_{\Omega^{(\RR)}}^{\sm}  
\oplus \widetilde{\mathcal{W}}_{\Gamma}^{\sm}.
\]
The selection of this subspace is motivated by the numerical results in \cite{KaVi17c} for $\sm=2$, and by our numerical experiments in 
Section~\ref{sec:examples} for $s=1,\ldots,4$, 
which indicate that the subspace~$\mathcal{W}^{\sm}$
possesses as the entire space~$\mathcal{V}^{\sm}$ optimal approximation properties.
The functions $\phi_{\Gamma; j_1,j_2}$ can be expressed by inserting 
\eqref{eq:basisG} into \eqref{eq:presentation} as
\begin{equation}
 \phi_{\Gamma; j_1,j_2}(\bfm{x}) = 
 \begin{cases}
  (\g_{\Gamma; j_1,j_2}^{(\LL)} \circ (\ab{F}^{(\LL)})^{-1})(\bfm{x}) \;
\mbox{ if }\f \, \bfm{x} \in \overline{\Omega^{(\LL)}},
\\[0.15cm] 
  (\g_{\Gamma; j_1,j_2}^{(\RR)} \circ (\ab{F}^{(\RR)})^{-1} )(\bfm{x}) \;
\mbox{ if }\f \, \bfm{x} \in \overline{\Omega^{(\RR)}}, 
\end{cases}
\, j_1=0,1,\ldots,\sm, \; j_2=0,1,\ldots,n_{j_1}-1, 
 \label{eq:basisFunctionsGenericEdge}
\end{equation}
where the functions $\g_{\Gamma; j_1,j_2}^{(\Side)}$
$ \Side \in \{\LL,\RR \}$, are given by
\begin{equation}   \label{eq:basisFunctionsGenericG}
 \g_{\Gamma; j_1,j_2}^{(\Side)}(\xi_1,\xi_2) =  \sum_{i=j_1}^{\sm}  {i \choose j_1} \left( \beta^{(\Side)}(\xi_2)\right)^{i-j_1} 
 \left( \alpha^{(\Side)}(\xi_2)\right)^{j_1} \dd^{i-j_1}\left( N_{j_2}^{p-j_1,r+\sm-j_1}(\xi_2)\right) \,  M_i^{p,r} (\xi_1).
\end{equation}
In addition, the functions~$\phi_{\Gamma; j_1,j_2}$, $j_1=0,1,\ldots,\sm$, $j_2=0,1,\ldots,n_{j_1} -1$, are linearly independent by construction, 
and hence form a basis of the space~$\widetilde{\mathcal{W}}_\Gamma^{\sm}$ and together with the functions $\phi_{\Omega^{(\Side)};j_1,j_2}$, 
$j_1=\sm+1,\ldots,n-1$, $j_2=0,1,\ldots, n-1$, $\Side \in \{\LL,\RR \}$, a basis of the space~$\mathcal{W}^{\sm}$. This further implies that the dimension of 
the subspace~$\mathcal{W}^{\sm}$ is independent of the parameterization of the initial geometry, and is equal to
\[
 \dim \mathcal{W}^{\sm} = \dim \mathcal{V}_{\Omega^{(\LL)}}^{\sm} + \dim \mathcal{V}_{\Omega^{(\RR)}}^{\sm} +  
 \dim \widetilde{\mathcal{W}}_\Gamma^{\sm},
\]
with
\[
 \dim \mathcal{V}_{\Omega^{(\LL)}}^{\sm}  = \dim \mathcal{V}_{\Omega^{(\RR)}}^{\sm} = n(n-(\sm+1))
\quad 
{\rm and} \quad
  \dim \widetilde{\mathcal{W}}_\Gamma^{\sm}  = \sum_{j_1=0}^\sm (n_{j_1}+1).
\]

%%%%%%%%%%%%%%%%%%%%%%%%%%%%%%%%%%%%%%%%%%%%%%%%%%%%%%%%%%%%%%%%%%%%%%%%%%%%%%%%%%%%%%%%%%%%%%%%%%%%%%%
\section{$C^{\sm}$-smooth isogeometric spaces over multi-patch domains} \label{sec:design_multipatches}
%%%%%%%%%%%%%%%%%%%%%%%%%%%%%%%%%%%%%%%%%%%%%%%%%%%%%%%%%%%%%%%%%%%%%%%%%%%%%%%%%%%%%%%%%%%%%%%%%%%%%%%

In this section, we will extend the construction of the $C^s$-smooth isogeometric subspace~$\mathcal{W}^s \subseteq \mathcal{V}^s$ 
for bilinearly parameterized two-patch domains, which has been described in the previous section, to the case of bilinear multi-patch 
domains~$\overline{\Omega}$ with more than two patches and with possibly extraordinary vertices.
The proposed construction will work uniformly for all possible multi-patch configurations and is much simpler as for 
the entire $C^{\sm}$-smooth space~$\mathcal{V}^{\sm}$. Thereby, the design of the subspace~$\mathcal{W}^{\sm}$ will be based on the results 
of the two-patch case, and is motivated by the methods~\cite{KaSaTa19b,KaSaTa19a} and \cite{KaVi19a,KaVi20}, where similar 
subspaces have been generated for a global smoothness of~$\sm=1$ and $\sm=2$, respectively. There, it has been numerically shown that the 
corresponding subspaces possess as the entire $C^{\sm}$-smooth space~$\mathcal{V}^{\sm}$ optimal approximation properties. This will be also 
numerically verified in Section~\ref{sec:examples} on the basis of an example for the subspace~$\mathcal{W}^{\sm}$ for $\sm=1,\ldots,4$.

The subspace~$\mathcal{W}^{\sm}$ will be generated as the
direct sum of smaller subspaces corresponding to the single patches~$\Omega^{(i)}$, $i \in \mathcal{I}_\Omega$, edges~$\Gamma^{(i)}$, 
$i \in \mathcal{I}_\Gamma$, and vertices $\Xi^{(i)}$, $i \in \mathcal{I}_\Xi$, i.e., 
\begin{equation} \label{eq:W}
    \mathcal{W}^\sm =  \left(\bigoplus_{i \in \mathcal{I}_\Omega} \mathcal{W}^\sm_{\Omega^{(i)}}\right) \oplus 
    \left(\bigoplus_{i \in \mathcal{I}_\Gamma} \mathcal{W}^\sm_{\Gamma^{(i)}}\right) 
    \oplus  
    \left(\bigoplus_{i \in \mathcal{I}_\Xi} \mathcal{W}^\sm_{\Xi^{(i)}}\right).
\end{equation}
In order to ensure $h$-refineable and well-defined subspaces, we 
have to assume additionally that the number of inner knots satisfies 
$k \geq \frac{4\sm+1-p}{p-r-\sm}$, which implies $h \leq \frac{p-r-s}{3s-r+1}$. The construction of the particular subspaces 
in \eqref{eq:W} will be based on functions from the subspaces 
$\mathcal{V}^\sm_{\Omega^{(\Side)}}$, $\Side \in \{\LL,\RR \}$, and ${\mathcal{W}^\sm_{\Gamma}}$, 
which have been both generated in Section~\ref{sec:two-patch-case} for the case of a bilinearly parameterized two-patch domain
$\overline{\Omega} = \overline{\Omega^{(\LL)}} \cup \overline{\Omega^{(\RR)}}$ with the common edge $\overline{\Gamma} = 
\overline{\Omega^{(\LL)}} \cap 
\overline{\Omega^{(\RR)}}$, and will be described in detail below. 

\subsection{The patch and edge subspaces}

We will first describe the construction of the subspaces $\mathcal{W}^\sm_{\Omega^{(i)}}$, $i \in \mathcal{I}_\Omega$, and 
$\mathcal{W}^\sm_{\Gamma^{(i)}}$, $i \in \mathcal{I}_\Gamma$.
Analogous to \eqref{eq:PhiOmega} in case of a two-patch domain, we define the functions ${\phi}_{\Omega^{(i)};j_1,j_2}: \overline{\Omega} 
\to \R$ in case of a multi-patch domain $\overline{\Omega} = \cup_{i \in \mathcal{I}_{\Omega}} \overline{\Omega^{(i)}}$ as
\begin{equation}  \label{eq:PhiOmega2}
{\phi}_{\Omega^{(i)};j_1,j_2}(\bfm{x})  = 
\begin{cases}
   (N_{j_1,j_2}^{\bfm{p},\bfm{r}}\circ (\ab{F}^{(i)})^{-1})(\bfm{x}) \;
\mbox{ if }\f \, \bfm{x} \in \overline{\Omega^{(i)}},
\\ 0 \quad \mbox{ if }\f \, \bfm{x} \in \overline{\Omega} \backslash \overline{\Omega^{(i)}},
\end{cases} 
\end{equation}
and then define the patch subspace $\mathcal{W}^\sm_{\Omega^{(i)}}$ as
\begin{equation*} \label{eq:spaceW0hOmega}
\mathcal{W}^\sm_{\Omega^{(i)}} =  \Span \left\{ \phi_{\Omega^{(i)};j_1,j_2} |\;  j_1,j_2=\sm+1,\sm+2,\ldots,n-1-(\sm+1)  \right\}.
\end{equation*}
We clearly have
$
\mathcal{W}^\sm_{\Omega^{(i)}}  \subseteq \mathcal{V}^\sm,
$
since the functions~$\phi_{\Omega^{(i)};j_1,j_2}$, $j_1,j_2=\sm+1,\ldots,n-1-(\sm+1)$, have a support entirely inside 
$ \Omega^{(i)}$, are clearly $C^\sm$-smooth on $\Omega^{(i)}$, and have vanishing values and derivatives up to order $\sm$ on $\partial 
\Omega^{(i)}$.

Let us construct now the edge subspaces $\mathcal{W}^\sm_{\Gamma^{(i)}}$, $i \in \mathcal{I}_\Gamma$, where we have to distinguish 
between boundary and inner edges~$\Gamma^{(i)}$. In case of a boundary edge $\Gamma^{(i)} \subseteq \overline{\Omega^{(i_0)}}$, 
$i_0 \in \mathcal{I}_{\Omega}$, we can assume without loss of generality that the boundary edge $\Gamma^{(i)}$ is given by
$\ab{F}^{(i_0)}(\{0 \} \times (0,1)) $.
Then, we generate the edge subspace $\mathcal{W}^\sm_{\Gamma^{(i)}}$ as
\begin{align*} 
\mathcal{W}^\sm_{\Gamma^{(i)}} =&  \Span \left\{ \phi_{\Omega^{(i_0)}; j_1,j_2} |\; j_2=2\sm +1-j_1,\ldots,n+j_1-(2\sm +2), \; j_1=0,1,\ldots,\sm \right\} ,
\end{align*}
where the functions~$\phi_{\Omega^{(i_0)}; j_1,j_2}$ are defined as in~\eqref{eq:PhiOmega2}. Similar to the
patch subspace~$\mathcal{W}^\sm_{\Omega^{(i)}}$, the functions 
$\phi_{\Omega^{(i_0)}; j_1,j_2}$, $j_2=2\sm +1-j_1,\ldots,n+j_1-(2\sm +2)$, $j_1=0,1,\ldots,\sm$, are trivially $C^{\sm}$-smooth on 
$\overline{\Omega}$, which implies that $\mathcal{W}^\sm_{\Gamma^{(i)}}  \subseteq \mathcal{V}^\sm $.

Let us consider now the case of an inner edge $\Gamma^{(i)} \subseteq  \overline{\Omega^{(i_0)}} \cap \overline{\Omega^{(i_1)}}$, 
$i_0,i_1 \in \mathcal{I}_{\Omega}$. Without loss of generality, we can assume that the two associated geometry 
mappings~$\ab{F}^{(i_0)}$ and $\ab{F}^{(i_1)}$ are parameterized as shown in Fig.~\ref{fig:twopatchCase}.
The edge subspace $\mathcal{W}^\sm_{\Gamma^{(i)}}$ is now defined as
\begin{equation*} \label{eq:spaceW0hGamma}
\mathcal{W}^\sm_{\Gamma^{(i)}} =  \Span \left\{\phi_{\Gamma^{(i)};j_1,j_2}| \;\; j_2=2\sm+1 -j_1,
\ldots,n_{j_1}+j_1-(2\sm+2),\; j_1=0,1,\ldots,\sm \right\},
\end{equation*} 
where the functions $\phi_{\Gamma^{(i)};j_1,j_2}$ possess similar to the two-patch case~\eqref{eq:basisFunctionsGenericEdge} the form
\begin{equation}  \label{eq:defphiGamma}
    \phi_{\Gamma^{(i)};j_1,j_2} (\bfm{x}) = 
\begin{cases}
  (\g_{\Gamma^{(i)}; j_1,j_2}^{(i_0)} \circ (\ab{F}^{(i_0)})^{-1})(\bfm{x}) \;
\mbox{ if }\f \, \bfm{x} \in \overline{\Omega^{(i_0)}},
\\[0.15cm] 
  (\g_{\Gamma^{(i)}; j_1,j_2}^{(i_1)} \circ (\ab{F}^{(i_1)})^{-1} )(\bfm{x}) \;
\mbox{ if }\f \, \bfm{x} \in \overline{\Omega^{(i_1)}}, 
\\[0.15cm] 
  0\quad {\rm otherwise},
\end{cases}
\end{equation}
and where the functions $\g_{\Gamma^{(i)}; j_1,j_2}^{(\tau)}$, $\tau \in \{i_0,i_1\}$, 
are specified in \eqref{eq:basisFunctionsGenericG}.
$\mathcal{W}^\sm_{\Gamma^{(i)}} \subseteq  \mathcal{V}^\sm$ follows from the fact that the functions $\phi_{\Gamma^{(i)};j_1,j_2}$, 
$j_2=2\sm+1 -j_1,\ldots, n_{j_1}+j_1-(2\sm+2)$, $j_1=0,1,\ldots,\sm,$ possess a support contained in $\overline{\Omega^{(i_0)}} \cup 
\overline{\Omega^{(i_1)}}$, are $C^s$-smooth at the edge~$\Gamma^{(i)}$ by construction, and have vanishing values and derivatives up to 
order $\sm$ at all other edges~$\overline{\Gamma^{(\ell)}}$, $\ell \in \mathcal{I}_\Gamma \setminus \{ i \}$.

\subsection{The vertex subspaces }

We will denote by $v_i$ the patch valency of a vertex~$\Xi^{(i)}$, $i \in \mathcal{I}_{\Xi}$. To generate the vertex 
subspaces~$\mathcal{W}^\sm_{\Xi^{(i)}}$, we will distinguish between different types of vertices~$\Xi^{(i)}$, namely between 
inner and boundary vertices, and in the latter case also between boundary vertices of patch valency~$v_{i} \geq 3$, $v_{i} =2$ and 
$v_{i}=1$. We will follow a similar approach
as used in \cite{KaSaTa19b,KaSaTa19a} and \cite{KaVi19a,KaVi20} for the construction of $C^1$ and $C^2$-smooth isogeometric spline
functions in the vicinity of the vertex~$\Xi^{(i)}$.

Let us start with the case of an inner vertex~$\Xi^{(i)}$, $i \in \mathcal{I}_{\Xi}$.
We can assume without loss of generality that all patches $\Omega^{(i_\rho)}$, $\rho=0,1,\ldots,v_i-1$, around the vertex~$\Xi^{(i)}$, 
i.e. $\Xi^{(i)} = \cap_{\rho=0}^{v_i-1} \overline{\Omega^{(i_\rho)}}$, are parameterized and labeled as shown in Fig.~\ref{fig:MultiPatchDomain}.  
In addition, we relabel the common edges $\overline{\Omega^{(i_\rho)}} \cap \overline{\Omega^{(i_{\rho+1})}}$, $\rho=0,1,\ldots,v_i-1$, 
by $\Gamma^{(i_{\rho+1})}$, where we take the lower index~$\rho$ of the indices~$i_{\rho}$ modulo~$v_i$.

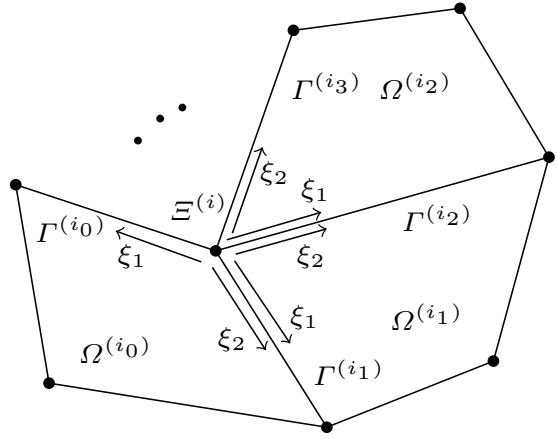
\begin{figure}[hbt]
\begin{center}
\resizebox{0.5\textwidth}{!}{
 \begin{tikzpicture}
  \coordinate(A) at (0,0);  \coordinate(B) at (3.,0.85); \coordinate(C) at (2.2,2.2); \coordinate(D) at (-1.8,0.6);
  \coordinate(E) at (-1.5,-1.2); \coordinate(F) at (1.0,-1.6); \coordinate(G) at (2.5,-1); 
  \coordinate(H) at (0.7,2);
  \coordinate(X) at (-0.3,1.3);
  \coordinate(Y) at (-0.7,1);
  \coordinate(Z) at (-0.5,1.2);

  \draw (A) -- (B); \draw (A) -- (D);  \draw (A) -- (F); 
  \draw (B) -- (C);  \draw (D) -- (E); \draw (E) -- (F); \draw (F) -- (G); \draw (G) -- (B);
  \draw (C) -- (H); \draw (H) -- (A);

  \fill (A) circle (1.5pt); \fill (B) circle (1.5pt); \fill (C) circle (1.5pt); \fill (D) circle (1.5pt);
  \fill (E) circle (1.5pt); \fill (F) circle (1.5pt); \fill (G) circle (1.5pt);  \fill (H) circle (1.5pt); 
  \fill (X) circle (1pt);
  \fill (Y) circle (1pt);
  \fill (Z) circle (1pt);

  \draw[->] (0.18,-0.03) -- (1.,0.2);
  \draw[->] (0.1,0.1) -- (0.95,0.35);
  \draw[->] (0.17,-0.09) -- (0.67,-0.84);
  \draw[->] (-0.03,-0.15) -- (0.45,-0.9);
  \draw[->] (-0.13,-0.1) -- (-0.9,0.18);
  \draw[->] (0.15,0.16) -- (0.42,0.94);

  \node at (0.86,-0.05) {\scriptsize $\xi_2$};
  \node at (0.9,0.55) {\scriptsize $\xi_1$};
  \node at (0.53,0.7) {\scriptsize $\xi_2$};
  \node at (-0.75,-0.05) {\scriptsize $\xi_1$};
  \node at (0.15,-0.8) {\scriptsize $\xi_2$};
  \node at (0.8,-0.6) {\scriptsize $\xi_1$};
  \node at (-0.9,-0.9) {\scriptsize $\Omega^{(i_0)}$};
  \node at (1.9,-0.6) {\scriptsize $\Omega^{(i_1)}$};
  \node at (1.8,1.5) {\scriptsize $\Omega^{(i_2)}$};
  \node at (-0.15,0.4) {\scriptsize $\Xi^{(i)}$};
  \node at (-1.3,0.2) {\scriptsize {$\Gamma^{(i_0)}$}};
  \node at (1.2,-1.1) {\scriptsize {$\Gamma^{(i_1)}$}};
  \node at (2.0,0.3) {\scriptsize {$\Gamma^{(i_2)}$}};
  \node at (1.0,1.5) {\scriptsize {$\Gamma^{(i_3)}$}};
  \end{tikzpicture}
 }
\end{center}
\caption{The parameterization of the patches $\Omega^{(i_0)},\Omega^{(i_1)},\ldots,\Omega^{(i_{v_i-1})}$ with the 
edges~$\Gamma^{(i_0)},\Gamma^{(i_1)}, \ldots,  \Gamma^{(i_{v_i-1})} $ around the inner vertex $\Xi^{(i)}$.}
\label{fig:MultiPatchDomain}
\end{figure}

The design of the subspace~$\mathcal{W}^\sm_{\Xi^{(i)}}$ is based on the construction of $C^s$-smooth functions in the vicinity of 
the vertex~$\Xi^{(i)}$, which will be formed by the linear combination of functions~$\phi_{\Gamma^{(i_{\rho})};j_1,j_2}$, $j_2 = 0,1,\ldots, 
2\sm -j_1,\; j_1=0,1,\ldots,\sm$, $\rho = 0,1,\ldots, v_i-1$, coinciding at their common supports in the vicinity of the vertex~$\Xi^{(i)}$, 
and by subtracting those ``standard'' isogeometric spline functions~$\phi_{\Omega^{(i_{\rho})}; j_1, j_2}$, $j_1,j_2=0,1,\ldots,s$, 
$\rho=0,1,\ldots,v_i-1$, which have been added twice. For this purpose, let us consider the isogeometric spline function
\begin{equation}  \label{eq:defphiXi}
  \phi_{\Xi^{(i)}} (\ab{x}) = 
  \begin{cases}
   \left( \ff_{i_\rho} \circ (\ab{F}^{(i_\rho)})^{-1}\right)(\bfm{x}) \;
\mbox{ if }\f \, \bfm{x} \in \overline{\Omega^{(i_\rho)}},\; \rho=0,1,\ldots,{v}_i -1,
\\
0 \quad \mbox{ otherwise,}
\end{cases}
\end{equation}
where the functions~$ \ff_{i_{\rho}}$ are given as 
\begin{align}  \label{eq:g_vertex}
 \ff_{i_{\rho}}(\xi_1,\xi_2)    =   f_{i_\rho}^{\Gamma^{(i_\rho)}}(\xi_1,\xi_2)
  +  f_{i_\rho}^{\Gamma^{(i_{\rho+1})}}(\xi_1,\xi_2) \, - 
 f_{i_\rho}^{\Omega^{(i_\rho)}}(\xi_1,\xi_2), 
  \end{align}
  with
\begin{align} \label{eq:g_vertex2} 
    f_{i_\rho}^{\Gamma^{(i_{\rho+\tau})}}(\xi_1,\xi_2)  & = \sum_{j_1=0}^\sm \sum_{j_2=0}^{2 \sm -j_1} a^{\Gamma^{(i_{\rho+\tau})}}_{j_1,j_2} \, 
  f_{\Gamma^{(i_{\rho+\tau}); j_1,j_2}}^{(i_\rho)} (\xi_{2-\tau},\xi_{1+\tau}), \quad { a^{\Gamma^{(i_{\rho+\tau})}}_{j_1,j_2} \in \R},
  \quad \tau = 0,1, \nonumber \\[-0.3cm] 
  \\[-0.3cm]
 f_{i_\rho}^{\Omega^{(i_\rho)}}(\xi_1,\xi_2)  & =  \sum_{j_1=0}^{\sm} \sum_{j_2=0}^{\sm } a^{(i_\rho)}_{j_1,j_2} N_{j_1,j_2}^{\ab{p},\ab{r}} 
 (\xi_1,\xi_2), \quad {a_{j_1,j_2}^{(i_\rho)} \in \R }, \nonumber
\end{align}
and with the functions~$f_{\Gamma^{(i_{\rho+\tau}); j_1,j_2}}^{(i_\rho)}$, $\tau=0,1,$ given in~\eqref{eq:basisFunctionsGenericG}.
The function~$\phi_{\Xi^{(i)}}$ is now $C^\sm$-smooth on~$\overline{\Omega}$, i.e.
{$\phi_{\Xi^{(i)}} \in \mathcal{V}^\sm$,}
if the coefficients {$a^{\Gamma^{(i_{\rho+\tau})}}_{j_1,j_2}$, $a_{j_1,j_2}^{(i_\rho)}$} satisfy the equations
\begin{equation} \label{eq:vertex_homogeneous_system}
  \partial_{1}^{\ell_1}  
  \partial_{2}^{\ell_2} 
   \left( f_{i_\rho}^{\Gamma^{(i_{\rho+1})}} - f_{i_\rho}^{\Gamma^{(i_\rho)}} \right)  (\bfm{0}) = 0 \quad \mbox{ and } \quad
   \partial_{1^{}}^{\ell_1}
   \partial_{2^{}}^{\ell_2}
   \left( f_{i_\rho}^{\Gamma^{(i_{\rho+1})}} - f_{i_\rho}^{\Omega^{(i_\rho)}} \right)  (\bfm{0}) = 0, 
\end{equation} 
for $ 0 \leq \ell_1, \ell_2 \leq \sm$ and $\rho =0,1, \ldots, v_{i}-1$.
The equations~\eqref{eq:vertex_homogeneous_system} form a homogeneous system of linear equations
\begin{equation} \label{eq:homogoneous_system_Large}
 T^{(i)} \bfm{a} = \f{0},
\end{equation}
where the vector~$\bfm{a}$ consists of all coefficients $a^{\Gamma^{(i_{\rho+\tau})}}_{j_1,j_2}$, $a_{j_1,j_2}^{(i_\rho)}$.
Any choice of the vector~$\bfm{a}$, which fulfills the linear system \eqref{eq:homogoneous_system_Large}, yields an isogeometric 
function~\eqref{eq:defphiXi} 
belonging to the spline space~$\mathcal{V}^\sm$.
Each basis of the kernel $\ker (T^{(i)})$ determines 
$\dim \ker (T^{(i)})$ linearly independent $C^\sm$-smooth isogeometric 
spline functions, which are denoted by $\phi_{\Xi^{(i)}, j}$, $j \in \{0,1,\ldots,
\dim \ker (T^{(i)})-1\}$, 
and which can be used to define the vertex subspace~$\mathcal{W}^\sm_{\Xi^{(i)}}$ via
\[
\mathcal{W}^\sm_{\Xi^{(i)}} = \Span \left\{ \phi_{{\Xi^{(i)}}, j} \;|\; j \in \{0,1,\ldots,
\dim \ker (T^{(i)})-1\} \right\} \subseteq \mathcal{V}^\sm.
\]
As in the case of the three-patch domain in the numerical examples in Section~\ref{sec:examples}, see 
Fig.~\ref{fig:examples} (bottom row), we can employ the algorithm developed in~\cite{KaVi17a} for computing a basis of $\ker (T^{(i)})$, which 
is based on the concept of minimal determining sets (cf.\cite{LaSch07}) for the coefficients~$\bfm{a}$. 

Let us continue with the three different mentioned cases of a boundary vertex~$\Xi^{(i)}$, $i \in \mathcal{I}_{\Xi}$. A boundary 
vertex~$\Xi^{(i)}$ of patch valency~$v_{i} \geq 3$ can be handled similarly as an inner vertex 
by assuming that the two boundary edges are labeled as $\Gamma^{(i_0)}$ and $\Gamma^{(i_{v_{i}})}$. 
Then, the only difference in the construction of the $C^s$-smooth functions~$\phi_{\Xi^{(i)}, j}$ and of the 
$C^s$-smooth space~$\mathcal{W}^{\sm}_{\Xi^{(i)}} \subseteq \mathcal{V}^{s}$ is that for the patches~$\Omega^{(i_0)}$ and $\Omega^{(i_{v_i-1})}$ 
the functions~$f_{i_0;j_1,j_2}^{\Gamma^{(i_0)}}$ and $f_{i_{v_{i}-1};j_1,j_2}^{\Gamma^{(i_{v_i})}}$ in~\eqref{eq:g_vertex2} are 
just the standard B-splines.

In case of a boundary vertex~$\Xi^{(i)}$ of patch valency~$v_i=2$, we can assume without loss of generality that the two 
neighboring patches~$\Omega^{(i_0)}$ and $\Omega^{(i_1)}$, $i_0,i_1 \in \mathcal{I}_{\Omega}$, which contain the 
vertex~$\Xi^{(i)}$ and possess the common edge~$\overline{\Gamma^{(j_0)}}= \overline{\Omega^{(i_0)}} \cap \overline{\Omega^{(i_1)}}, j_0 
\in \mathcal{I}_{\Gamma}$, are parameterized as shown in Fig.~\ref{fig:twopatchCase} and that the vertex~$\Xi^{(i)}$ is further given as 
$\Xi^{(i)} = \ab{F}^{(i_0)}(\ab{0}) = \ab{F}^{(i_1)}(\ab{0})$. Then, the vertex subspace~$\mathcal{W}^{\sm}_{\Xi^{(i)}}$ is generated as
\begin{align*} \label{eq:spaceWXB}
\mathcal{W}^\sm_{\Xi^{(i)}} =&  \Span \left\{ \widetilde{\phi}_{\Xi^{(i)}; j_1,j_2} |\;   
j_1=0,1,\ldots,3\sm, \; j_2 = \begin{cases}
                         0,1,\ldots, 2\sm-j_1 & \mbox{if }j_1 \leq 2\sm \\
                         0,1,\ldots, 3\sm-j_1 & \mbox{if }j_1 > 2\sm 
                                          \end{cases}
 \; \right \} ,
\end{align*}
with the functions
\begin{equation*} \label{eq:phi_vertex_2}
 \widetilde{\phi}_{\Xi^{(i)};j_1,j_2} (\bfm{x}) = 
    \begin{cases} 
     \phi_{\Gamma^{(j_0)};j_1,j_2}(\ab{x}) & \mbox{if }j_1=0,1,\ldots ,\sm\\
     \phi_{\Omega^{(i_{0})};j_1,j_2}(\ab{x}) & \mbox{if }j_1=\sm+1,\sm+2,\ldots,2\sm\\
     \phi_{\Omega^{(i_{1})};j_1-\sm,j_2}(\ab{x}) & \mbox{if }j_1=2\sm+1,2\sm+2,\ldots,3\sm,
    \end{cases}
\end{equation*}
where the functions $\phi_{\Omega^{(i_{\ell})};j_1,j_2}$, $\ell=0,1$, and $\phi_{\Gamma^{(j_0)}_{};j_1,j_2}$ are defined as 
in~\eqref{eq:PhiOmega2} and \eqref{eq:defphiGamma}, respectively. $\mathcal{W}^{\sm}_{\Xi^{(i)}} \subseteq \mathcal{V}^{\sm}$ results from the fact that 
the functions~$\phi_{\Gamma^{(j_0)};j_1,j_2}(\ab{x})$, $j_1=0,1,\ldots,\sm$, $j_2=0,1,\ldots,2\sm-j_1$, are $C^{\sm}$-smooth by construction 
on~$\overline{\Omega}$, and that the functions $\phi_{\Omega^{(i_{\ell})};j_1,j_2}$, $\ell=0,1$, $j_1=\sm+1,\sm+2,\ldots,3\sm$, $j_2=0,1,\ldots,2\sm-j_1$ if 
$j_1 \leq 2\sm$ and $j_2=0,1,\ldots,3\sm-j_1$ if $j_1 > 2\sm$, possess a 
support in $\overline{\Omega^{(i_{\ell})}}$, are $C^{\sm}$-smooth on $\overline{\Omega^{(i_\ell)}}$, and have vanishing values and derivatives
of order $\leq \sm$ along all inner edges~$\overline{\Gamma^{(j)}}$, $j \in \mathcal{I}_{\Gamma}$.

Finally, in case of a boundary vertex~$\Xi^{(i)}$ of patch valency $v_i=1$, we can assume without loss of generality that 
the boundary vertex $\Xi^{(i)}$ is given by $\Xi^{(i)}= \ab{F}^{(i_0)}(\ab{0})$, $i_0 \in \mathcal{I}_{\Omega}$. Then, 
the vertex subspace~$\mathcal{W}^{\sm}_{\Xi^{(i)}}$ is defined as
\begin{align*} 
\mathcal{W}^\sm_{\Xi^{(i)}} =&  \Span \left\{ \phi_{\Omega^{(i_0)}; j_1,j_2} |\;   
j_1,j_2 =  0,1,\ldots, 2 \sm, \; j_1+j_2 \leq 2 \sm \right\} ,
\end{align*} 
where the functions $\phi_{\Omega^{(i_0)}; j_1,j_2}$ are given as in~\eqref{eq:PhiOmega2}.
Again, the functions~$\phi_{\Omega^{(i_0)}; j_1,j_2}$, $j_1,j_2 =  0,1,\ldots, 2 \sm$, $j_1+j_2 \leq 2 \sm$, are entirely contained 
in~$\overline{\Omega^{(i_0)}}$, are $C^{\sm}$-smooth on $\overline{\Omega^{(i_0)}}$, and have vanishing values and derivatives of order 
$\leq \sm$ along all inner edges~$\overline{\Gamma^{(j)}}$, $j \in \mathcal{I}_{\Gamma}$, which implies that the functions are $C^\sm$-smooth 
on~$\overline{\Omega}$, and therefore $\mathcal{W}^{\sm}_{\Xi^{(i)}} \subseteq \mathcal{V}^{(\sm)}$.

Since the functions which have been used to generate the spaces~$\mathcal{W}^{\sm}_{\Omega^{(i)}}$, $i \in \mathcal{I}_{\Omega}$, 
$\mathcal{W}^{\sm}_{\Gamma^{(i)}}$, $i \in \mathcal{I}_{\Gamma}$, and $\mathcal{W}^{\sm}_{\Xi^{(i)}}$, $i \in \mathcal{I}_{\Xi}$, are 
linearly independent by definition and/or by construction, they form a basis of the individual spaces $\mathcal{W}^{\sm}_{\Omega^{(i)}}$, 
$\mathcal{W}^{\sm}_{\Gamma^{(i)}}$ and $\mathcal{W}^{\sm}_{\Xi^{(i)}}$, and therefore, they build a basis of the space~$\mathcal{W}^{\sm}$.
By the direct sum~\eqref{eq:W}, the dimension of $\mathcal{W}^\sm$ is equal to
$$
 \dim \mathcal{W}^\sm = \sum_{i \in \mathcal{I}_\Omega} \dim \mathcal{W}_{\Omega^{(i)}}^\sm  +  \sum_{i \in \mathcal{I}_\Gamma} 
 \dim \mathcal{W}_{\Gamma^{(i)}}^\sm  + \sum_{i \in \mathcal{I}_\Xi} \dim \mathcal{W}^\sm_{\Xi^{(i)}},
$$
where
$$
 \dim \mathcal{W}_{\Omega^{(i)}}^\sm = (n - 2(\sm+1))^2 , 
$$
$$
 \dim \mathcal{W}_{\Gamma^{(i)}}^\sm = \begin{cases}
   (\sm+1) \left(n-k \sm - \left( \frac{7 \sm}{2} + 2\right)\right)& \mbox{if $\Gamma^{(i)}$ is an inner edge}, \\
  (\sm+1) (n-3\sm-2) & \mbox{if $\Gamma^{(i)}$ is a boundary edge},
 \end{cases}
$$
and
$$
\dim \mathcal{W}^\sm_{\Xi^{(i)}} = \begin{cases}
                                    \dim \ker (T^{(i)}) & \mbox{if }\Xi^{(i)} \mbox{ is an inner or boundary vertex with }v_i\geq3,\\
                                     (\sm+1) (\frac{5}{2}\sm+1)& \mbox{if }\Xi^{(i)} \mbox{ is a boundary vertex with }v_i=2, \\
                                    (\sm+1)(2\sm+1) & \mbox{if }\Xi^{(i)} \mbox{ is a boundary vertex with }v_i=1.
                                   \end{cases}
$$
\begin{remark}
In case of a bilinearly parameterized two-patch domain, the two slightly different constructions described in this and in the previous section 
lead in both cases to the same subspace~$\mathcal{W}^{\sm}$ with the same basis, which can be easily verified by comparing the two differently 
generated bases. 
\end{remark}

Note that in case of an inner vertex or in case of a boundary vertex of 
patch valency~$v_{i} \geq 3$, $\dim \ker (T^{(i)})$, and hence $\dim \mathcal{W}^{\sm}_{\Xi^{(i)}}$, does not just depend
on the valency $v_i$ of the vertex~$\Xi^{(i)}$ but also
on the configuration of the bilinear patches around the corresponding vertex. 
An alternative approach for the case of an inner vertex and for the case of a boundary vertex of patch valency~$v_{i} \geq 3$, which leads also in 
these two cases to a vertex subspace whose dimension is independent of the configuration of the bilinear patches, 
is to enforce additionally $C^{2\sm}$-smoothness of the functions at the vertex~$\Xi^{(i)}$, see e.g.~\cite{KaSaTa19a,KaSaTa19b} and \cite{KaVi20} for 
$\sm=1$ and $\sm=2$, respectively.
Thereby, we will compute a subspace~$\widehat{\mathcal{W}}^{\sm}_{\Xi^{(i)}}$ of the vertex subspace~$\mathcal{W}^{\sm}_{\Xi^{(i)}}$, i.e.,
$\widehat{\mathcal{W}}^{\sm}_{\Xi^{(i)}} \subseteq \mathcal{W}^{\sm}_{\Xi^{(i)}}$.
For this purpose, let $\psi_{j_1,j_2}: \overline{\Omega} \to \R$, $j_1,j_2=0,1,\ldots,2\sm$ with $j_1+j_2 \leq 2\sm $, be functions which are 
$C^\sm$-smooth on $\overline{\Omega}$ and additionally $C^{2\sm}$-smooth at the 
vertex~$\Xi^{(i)}$ such that
\[
 \partial_{1}^{\ell_1} \partial_{2}^{\ell_2} \psi_{j_1,j_2} \left(\Xi^{(i)}\right) = %\sigma^{j_1+j_2} 
 \sigma^{\ell_1+\ell_2} \,\delta^{\ell_1}_{j_1} \delta^{\ell_2}_{j_2}, \quad
 \ell_1,\ell_2=0,1,\ldots,2\sm, \; \ell_1+\ell_2 \leq 2\sm ,
\]
where 
$\sigma$ is a scaling factor (cf.~\cite{KaSaTa19a}) given by
\[
 \sigma = \left(\frac{h}{p \, v_i} \sum_{\rho=0}^{v_i -1} ||{ {\rm J} \ab{F}^{(i_{\rho})}(\ab{0})} || \right)^{-1},
\] 
with
${\rm J} \ab{F}^{(i_{\rho})}$ being the Jacobian of $\ab{F}^{(i_{\rho})}$.
Then, isogeometric functions~$\widehat{\phi}_{\Xi^{(i)};j_1,j_2}$, $j_1,j_2=0,1,\ldots,2\sm$, $j_1+j_2 \leq 2 \sm$, 
can be defined via $\widehat{\phi}_{\Xi^{(i)};j_1,j_2}=\phi_{\Xi^{(i)}}$, 
with the functions $\phi_{\Xi^{(i)}}$ given in~\eqref{eq:defphiXi}, by means of the interpolation problem
\begin{equation} \label{eq:interpolation_conditions}
 \partial^{\ell_1}_{1} \partial^{\ell_2}_{2} \phi_{\Xi^{(i)}} \left(\Xi^{(i)}\right) = \partial_{1}^{\ell_1} \partial_{2}^{\ell_2} \psi_{j_1,j_2} 
 \left(\Xi^{(i)}\right), \quad
 \ell_1,\ell_2=0,1,\ldots,2\sm, \mbox{ } \ell_1+\ell_2 \leq 2 \sm.
\end{equation}
The isogeometric functions~$\phi_{\Xi^{(i)}}$, and therefore the isogeometric functions $\widehat{\phi}_{\Xi^{(i)};j_1,j_2}$, are uniquely determined 
by~\eqref{eq:interpolation_conditions} and can be computed via the coefficients~$a^{\Gamma^{(i_{\rho+\tau})}}_{j_1,j_2}$ and 
$a_{j_1,j_2}^{(i_\rho)}$ of the spline functions~$\ff_{i_{\rho}}$ in \eqref{eq:g_vertex} with the help of the following equivalent interpolation 
conditions
\begin{align*} 
   \partial_{1}^{\ell_1}  \partial_{2}^{\ell_2} \, f_{i_\rho}^{\Gamma^{(i_\rho)}}  (\bfm{0}) &=
     \partial_{1}^{\ell_1}   \partial_{2}^{\ell_2} \left( \psi_{j_1,j_2} \circ \bfm{F}^{(i_\rho)} \right)  (\bfm{0}),\ \;
   0 \leq \ell_1 \leq 2\sm, \;  0 \leq \ell_2 \leq \sm, \;  \ell_1+\ell_2 \leq 2\sm, \nonumber \\[-0.3cm] 
 & \label{eq:systemEq} \\[-0.3cm]   
        \partial_{1}^{\ell_1}  \partial_{2}^{\ell_2} \, f_{i_\rho}^{\Omega^{(i_{\rho})}} (\bfm{0}) &=
     \partial_{1}^{\ell_1}   \partial_{2}^{\ell_2} \left( \psi_{j_1,j_2} \circ \bfm{F}^{(i_\rho)} \right) (\bfm{0}), \ \;
   0 \leq \ell_1,\ell_2 \leq \sm, \nonumber
\end{align*}
for $\rho = 0,1,\ldots, v_{i}-1$. The resulting isogeometric spline functions~$\widehat{\phi}_{\Xi^{(i)};j_1,j_2}$, $j_1,j_2=0,1,\ldots,2\sm$, 
$j_1+j_2 \leq 2\sm$ are well-defined, $C^{\sm}$-smooth on~$\overline{\Omega}$ and even $C^{2\sm}$-continuous at the vertex~$\Xi^{(i)}$, and determine 
by
$$
 \widehat{\mathcal{W}}^\sm_{\Xi^{(i)}}=  \Span \left\{ \widehat{\phi}_{\Xi^{(i)};j_1,j_2} |\; \; j_1,j_2=0,1,\ldots,2\sm, \; j_1+j_2 \leq 2\sm \right\} 
 \subseteq \mathcal{W}^{\sm}_{\Xi^{(i)}} \subseteq  \mathcal{V}^{\sm},
$$
a vertex subspace~$\widehat{\mathcal{W}}^{\sm}_{\Xi^{(i)}}$ with a dimension which is independent of the valency $v_i$ of the vertex~$\Xi^{(i)}$ and  
of the configuration of the bilinear patches around the vertex, since it just equals
\[
\dim \widehat{\mathcal{W}}^\sm_{\Xi^{(i)}} = (s+1)(2s+1).
\]

%%%%%%%%%%%%%%%%%%%%%%%%%%%%%%%%%%%%%%%%%%%%%%%%%%%%%%%%
\section{Beyond bilinear parameterization} \label{sec:bilinearlike}
%%%%%%%%%%%%%%%%%%%%%%%%%%%%%%%%%%%%%%%%%%%%%%%%%%%%%%%%

In this section, we will briefly discuss a first possible generalization of the presented construction to a wider class of multi-patch 
parameterizations than the considered bilinear one. Motivated by~\cite{CoSaTa16} for $\sm=1$ and by~\cite{KaVi17c} for $\sm=2$, we are interested in multi-patch parameterizations which possess 
similar connectivity functions as in the bilinear case, in particular linear functions~$\alpha^{(\LL)}$, $\alpha^{(\RR)}$, $\beta^{(\LL)}, 
\beta^{(\RR)}$, along the interfaces~$\Gamma^{(i)}$, $i \in \mathcal{I}_{\Gamma}$. There, but also in further publications for the case~$\sm=1$ or 
$\sm=2$, see e.g.~\cite{KaSaTa19a,KaSaTa19b,KaVi19a}, it was numerically shown that such multi-patch parameterizations can allow similar to the bilinear 
case the construction of globally $C^s$-smooth isogeometric spline spaces with optimal approximation properties. Inspired by~\cite{KaVi17c}, we 
call these particular multi-patch parameterizations bilinear-like $G^s$ and define them as follows.

\begin{definition}  \label{def:bilinearlike}
A multi-patch parameterization~$\ab{F}$ consisting of the geometry mappings~
$$
\ab{F}^{(i)}\in 
\mathcal{S}^{\ab{p},\ab{r}}_{h}([0,1]^{2}) \times 
\mathcal{S}^{\ab{p},\ab{r}}_{h}([0,1]^{2}), \quad i \in \mathcal{I}_\Omega,
$$
is called \emph{bilinear-like $G^{\sm}$} if for any two neighboring 
patches~$\ab{F}^{(i_0)}$ and $\ab{F}^{(i_1)}$, $i_0,i_1 \in \mathcal{I}_{\Omega}$, assuming without loss of generality that
\begin{equation*} \label{eq:FfuncG0} 
 \ab{F}^{(\LL)}(0,\xi_2) = \ab{F}^{(\RR)}(0,\xi_2) ,
\end{equation*}
there exist linear functions 
$\alpha^{(\LL)},\alpha^{(\RR)}$, $\beta^{(\LL)}$, 
$\beta^{(\RR)} %\in \mathcal{P}^{1}([0,1])
$, such that
 \begin{equation*}   \label{eq:FC}
 \ab{F}_\ell^{(\LL)}(\xi) = \ab{F}_\ell^{(\RR)}(\xi) =: \ab{F}_\ell(\xi), \quad \ell =0,1,\ldots,\sm,
 \end{equation*}
 with
 \begin{equation*}   \label{eq:FC2}
 \ab{F}_\ell^{(\Side)}(\xi) = \left(\alpha^{(\Side)}(\xi)\right)^{-\ell}\, \partial_1^\ell \ab{F}^{(\Side)}(0,\xi) - \sum_{i=0}^{\ell-1} {\ell \choose i} 
 \left(\frac{\beta^{(\Side)}(\xi)}{\alpha^{(\Side)}(\xi)}\right)^{\ell-i}  \dd^{\ell-i} \ab{F}_i^{}(\xi) ,\quad \Side\in \{\LL,\RR\}.
 \end{equation*}
\end{definition}

For example, Theorem~\ref{thm:g_ell} and \ref{lem:explicit_second} can be directly applied by employing bilinear-like $G^s$ multi-patch parameterizations.
The advantage of using bilinear-like $G^\sm$ multi-patch parameterizations instead of bilinear multi-patch parameterizations is the possibility to 
deal with multi-patch domains with curved interfaces and curved boundaries, see e.g.~\cite{KaSaTa19a} and \cite{KaVi17c} for $\sm=1$ and for $\sm=2$, 
respectively. However, a detailed study about the generalization to bilinear-like $G^s$ multi-patch parameterizations is beyond the scope of the paper 
and will be a part of our planned future research. 

%%%%%%%%%%%%%%%%%%%%%%%%%%%%%%%%%%%%%%%%%%%%%%%%%%%%%%%%%%%%%%%%%%%%%
\section{Examples} \label{sec:examples}
%%%%%%%%%%%%%%%%%%%%%%%%%%%%%%%%%%%%%%%%%%%%%%%%%%%%%%%%%%%%%%%%%%%%%

\begin{figure}
\centering
\begin{tabular}{ccc}
\includegraphics[width=4.8cm,clip]{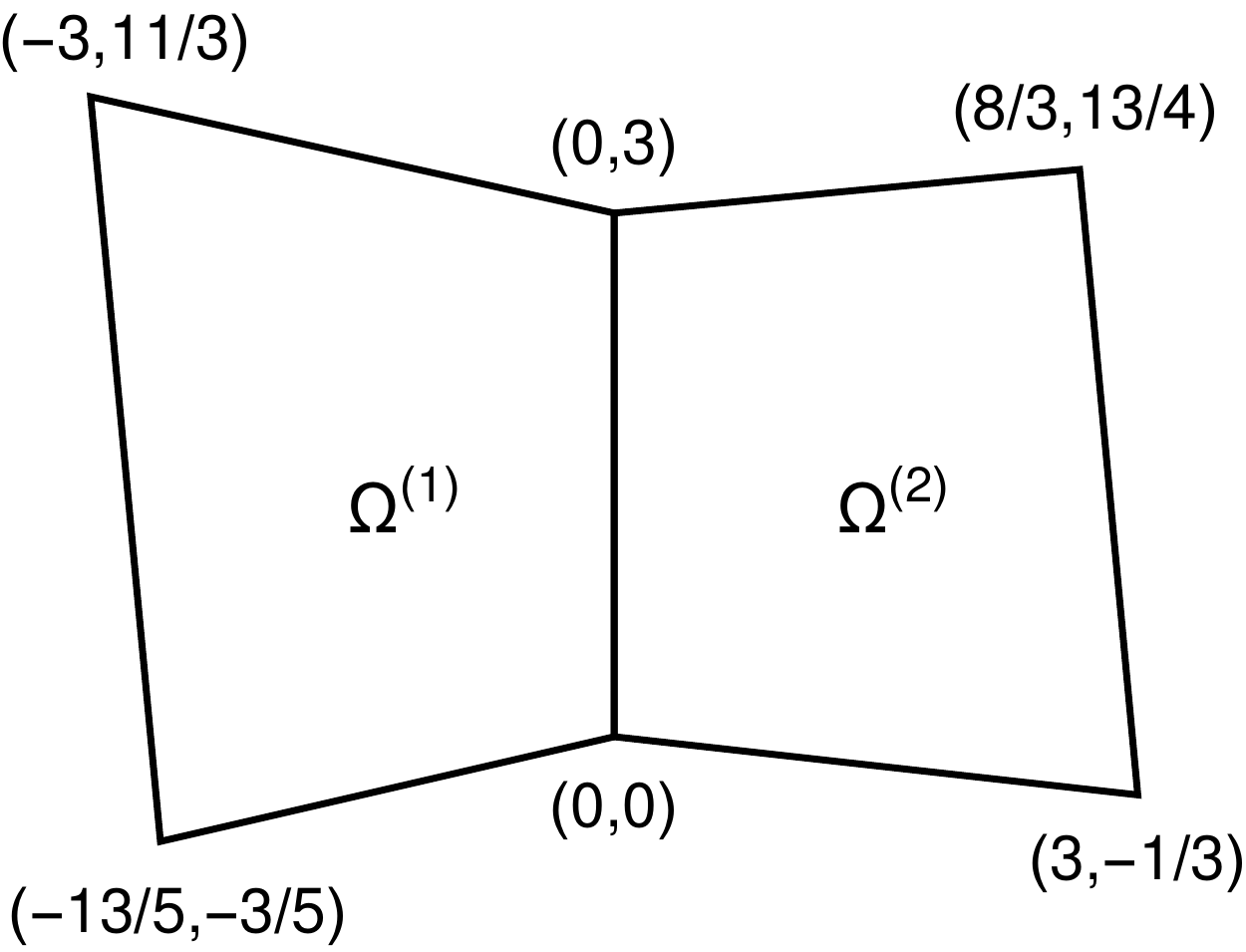} &
\includegraphics[width=4.1cm,clip]{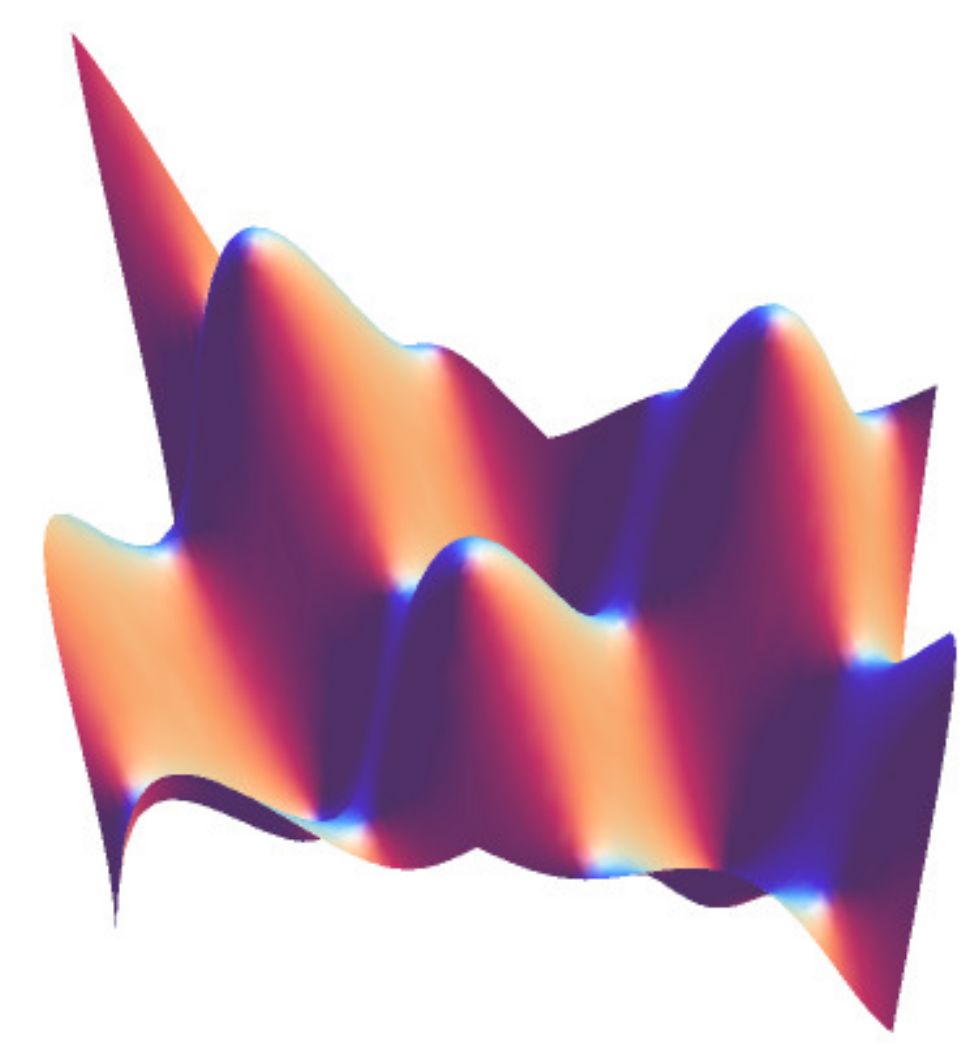} &
\includegraphics[width=4.7cm,clip]{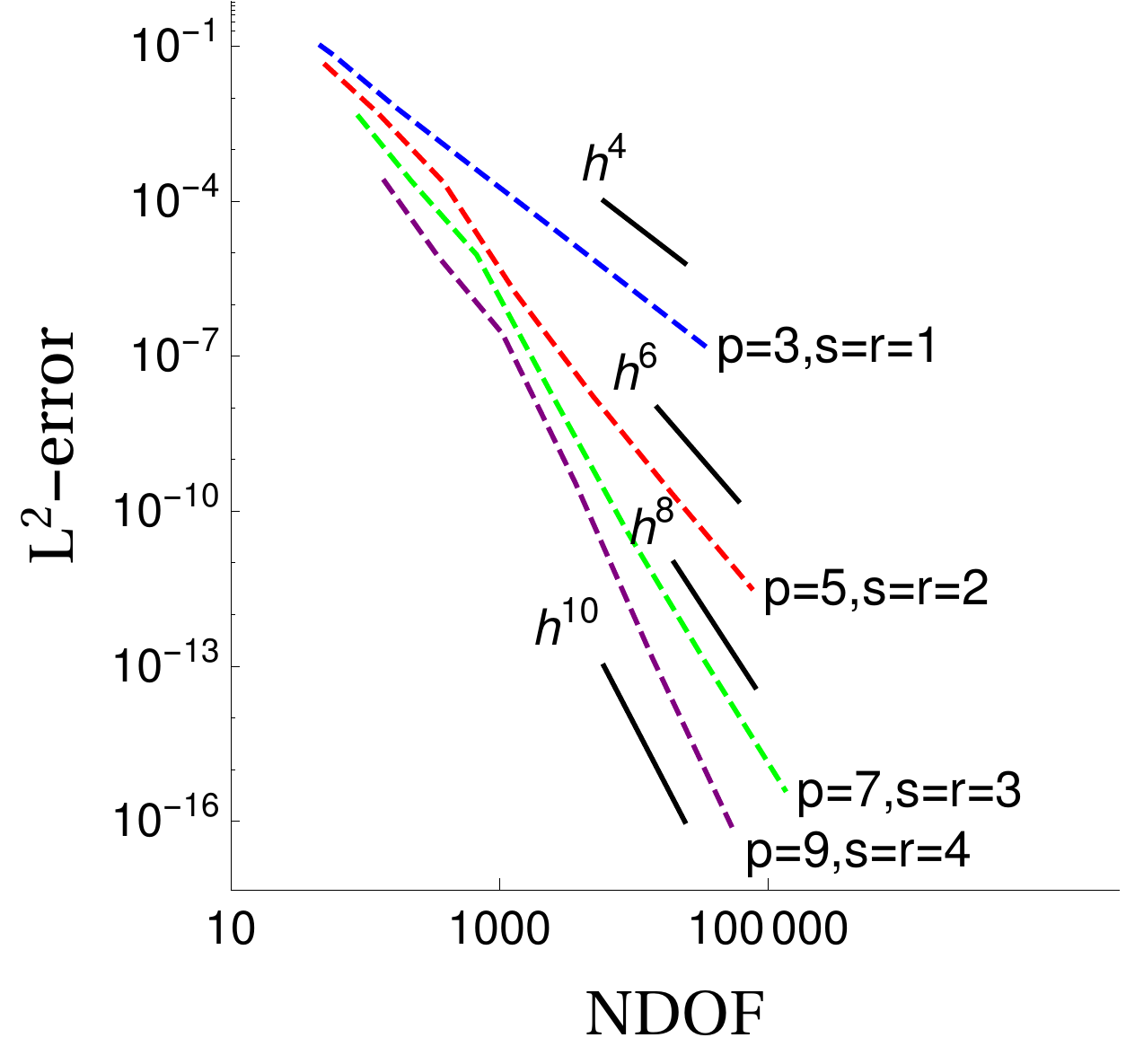} \\
\includegraphics[width=4.8cm,clip]{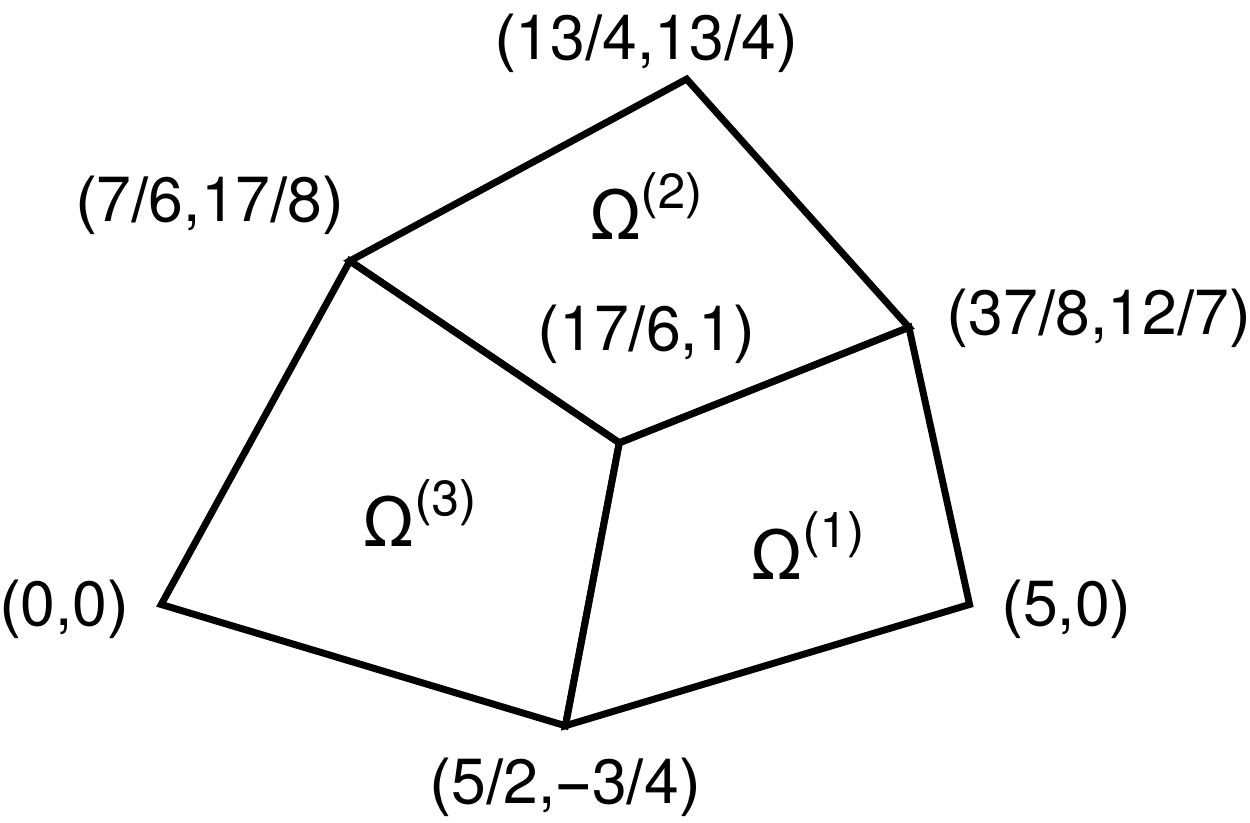} &
\includegraphics[width=4.1cm,clip]{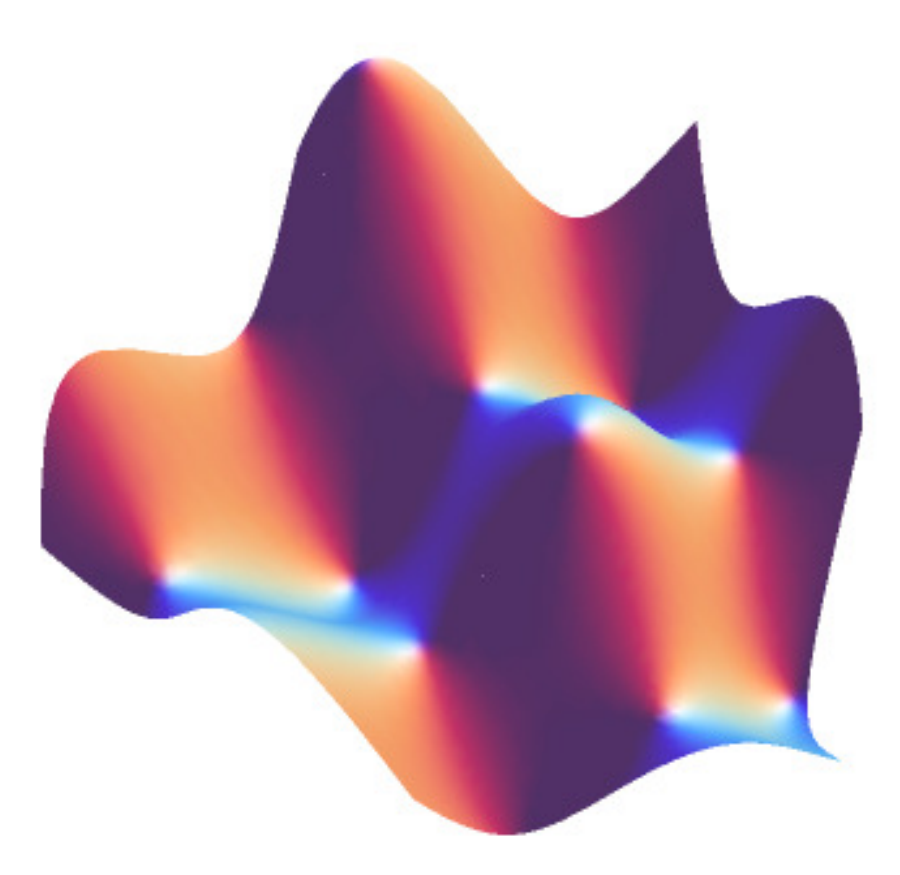} &
\includegraphics[width=4.7cm,clip]{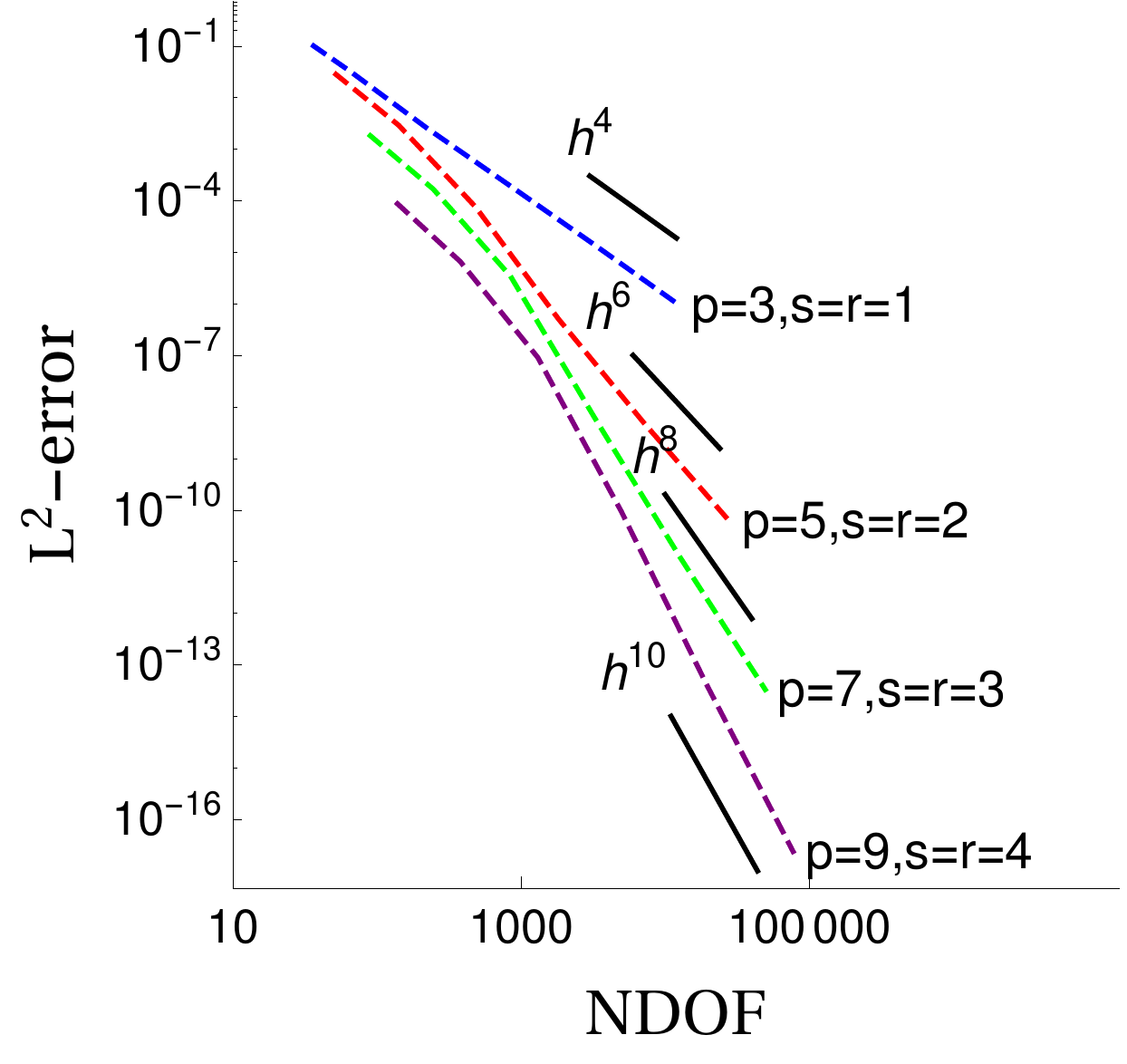} \\
Multi-patch domain~$\Omega$ & Smooth function~$z$ & Relative $L^2$ errors
\end{tabular}
\caption{$L^2$ approximation of the smooth function~\eqref{eq:smooth_function} (middle column) on two different bilinearly 
parameterized multi-patch 
domains~$\Omega$ (left column) using spline spaces~$\mathcal{W}^{\sm}$ for a global smoothness~$\sm=1,\ldots,4$, and a mesh 
size~$h=\frac{1}{2^L}$, with $L=0,1,\ldots,5$ or $L=0,1,\ldots,6$, for a spline degree~$p=2\sm +1$ and for an inner patch regularity~$r=\sm$. 
The resulting relative $L^2$ errors (right column) are visualized with respect to the number of degrees of freedom (NDOF).}
\label{fig:examples}
\end{figure}

The goal of this section is to numerically study the approximation power of the isogeometric spline space~$\mathcal{W}^{\sm}$ by performing 
$L^2$ approximation over the two bilinearly parameterized multi-patch domains~$\Omega$ given in Fig.~\ref{fig:examples} (left column). 
More precisely, we will approximate the smooth function 
\begin{equation} \label{eq:smooth_function}
z(\ab{x}) = z(x_1,x_2)= 4 \cos (2 x_1) \sin (2 x_2),
\end{equation}
visualized in Fig.~\ref{fig:examples}~(middle column) on these two multi-patch domains, by employing isogeometric spline 
spaces~$\mathcal{W}^{\sm}$ for a global 
smoothness~$\sm=1,\ldots,4$, and a mesh size~$h=\frac{1}{2^L}$, with $L=0,1,\ldots,5$ or $L=0,1,\ldots,6$, for a spline degree~$p=2\sm +1$ and for 
an inner patch regularity~$r=\sm$. Let $\{\phi_{j}\}_{j=0}^{\dim \mathcal{W}^{\sm}-1}$ be a basis of such an isogeometric spline 
space~$\mathcal{W}^{\sm}$, then we compute an approximation 
\[
 z_h(\ab{x}) = \sum_{j=0}^{\dim \mathcal{W}^{\sm}-1 } c_{j} \phi_{j}(\ab{x}), \quad c_{j} \in \R,
\]
of the function~$z$, by minimizing the objective function
\[
 \int_{\Omega} (z_h(\ab{x})-z(\ab{x}))^2 \mathrm{d}\ab{x}.
\]
Finding a solution of this minimization problem is equivalent to solving the linear system 
\[
 M \ab{c} = \ab{z}, \quad \ab{c}=(c_{j})_{j=0}^{\dim \mathcal{W}^{\sm} -1},
\]
where $M$ is the mass matrix with the single entries
\begin{equation} \label{eq:mass_entries}
 m_{j_1,j_2}= \int_{\Omega} \phi_{j_1} (\ab{x}) \phi_{j_2}(\ab{x}) \mathrm{d}\ab{x},
\end{equation}
and $\ab{z}$ is the right side vector with the single entries
\begin{equation} \label{eq:right_entries}
 z_{j} = \int_{\Omega} z(\ab{x}) \phi_{j} (\ab{x}) \mathrm{d}\ab{x}.
\end{equation}
Using the relation~$f_{j}^{(i)} = \phi_{j} \circ \ab{F}^{(i)}$, $i \in \mathcal{I}_{\Omega}$, the entries~\eqref{eq:mass_entries} and 
\eqref{eq:right_entries} can be computed via
\[
 m_{j_1,j_2}= \sum_{i \in \mathcal{I}_{\Omega}} \int_{[0,1]^2}  f_{j_1}^{(i)}(\xi_1,\xi_2)  f_{j_2}^{(i)}(\xi_1,\xi_2) 
 |\det ( {\rm J}\ab{F}^{(i)}(\xi_1,\xi_2))| \mathrm{d}\xi_1 \mathrm{d}\xi_2,
\]
and
\[
 z_{j} =  \sum_{i \in \mathcal{I}_{\Omega}} \int_{[0,1]^2} z(\ab{F}^{(i)}(\xi_1,\xi_2)) f_{j}^{(i)}(\xi_1,\xi_2) 
 |\det ( {\rm J} \ab{F}^{(i)}(\xi_1,\xi_2))| \mathrm{d}\xi_1 \mathrm{d}\xi_2,
\]
respectively.

While in case of the bilinearly parameterized two-patch domain, see Fig.~\ref{fig:examples} (top row and left column), the basis for the 
space~$\mathcal{W}^{\sm}$ is generated as described in Section~\ref{subsec:design_twopatches}, in case of the bilinearly parameterized three-patch 
domain, see Fig.~\ref{fig:examples} (bottom row and left column), the basis is constructed as explained in 
Section~\ref{sec:design_multipatches}. In the latter case, the design of the vertex subspaces~$\mathcal{W}^\sm_{\Xi^{(i)}}$ has to be 
slightly modified in case of a mesh size~$\frac{p-r-s}{3s-r+1} \leq h \leq 1$ for boundary vertices. Namely, the vertex 
subspace~$\mathcal{W}^\sm_{\Xi^{(i)}}$ for a boundary vertex~$\Xi^{(i)}$ is then just generated by those corresponding functions 
$\phi_{\Gamma^{(i)};j_1,j_2}$ and/or $\phi_{\Omega^{(i)};j_1,j_2}$, which have not been already used to construct the vertex subspace for another 
vertex especially for the inner vertex. 

Fig.~\ref{fig:examples} (right column) displays the resulting relative $L^2$ errors with respect to the number of degrees of freedom (NDOF) by 
performing $L^2$ approximation on the two different bilinearly parameterized multi-patch domains. In all cases, the numerical results 
indicate a convergence rate of optimal order of~$\mathcal{O}(h^{p+1})$ in the $L^2$ norm. In case of the three-patch domain, the shown results have 
been obtained by employing the minimal determing set approach for the construction of the vertex subspaces. However, the use of the alternative 
interpolation strategy instead would lead to a nearly indistinguishable result but which is not presented here. The number of degrees of freedom, 
i.e. the dimensions of the obtained isogeometric spline spaces~$\mathcal{W}^{\sm}$ for the two different multi-patch domains are reported in 
Table~\ref{tab:examples}. 

\begin{table}[htb]
 \centering\footnotesize 
 \begin{tabular}{|c|c|c|c|c|} \hline
 & \multicolumn{4}{|c|}{Two-patch domain} \\ \hline
 $h$ & $p=3$, $s=r=1$ & $p=5$, $s=r=2$ & $p=7$, $s=r=3$ & $p=9$, $s=r=4$ \\ \hline 
$1$            & 23   & 51    & 90    & 140 \\ \hline 
$\frac{1}{2}$  & 57   & 126   & 222   & 345 \\ \hline
$\frac{1}{4}$  & 173  & 384   & 678   & 1055 \\ \hline
$\frac{1}{8}$  & 597  & 1332  & 2358  & 3675 \\ \hline
$\frac{1}{16}$ & 2213 & 4956  & 8790  & 13715 \\ \hline
$\frac{1}{32}$ & 8517 & 19116 & 33942 & 52995 \\ \hline
\hline
  & \multicolumn{4}{|c|}{Three-patch domain} \\ \hline
 $h$ & $p=3$, $s=r=1$ & $p=5$, $s=r=2$ & $p=7$, $s=r=3$ & $p=9$, $s=r=4$ \\ \hline 
$1$            & 24 (24)       & 52 (51)       & 90 (88)       & 139 (135) \\ \hline 
$\frac{1}{2}$  & 66 (66)       & 142 (141)     & 246 (244)     & 379 (375) \\ \hline
$\frac{1}{4}$  & 222 (222)     & 484 (483)     & 846 (844)     & 1309 (1305) \\ \hline
$\frac{1}{8}$  & 822 (822)     & 1816 (1815)   & 3198 (3196)   & 4969 (4965) \\ \hline
$\frac{1}{16}$ & 3174 (3174)   & 7072 (7071)   & 12510 (12508) & 19489 (19485) \\ \hline
$\frac{1}{32}$ & 12486 (12486) & 27952 (27951) & 49566 (49564) & 77329 (77325) \\ \hline
  \end{tabular}
  \caption{The number of degrees of freedom, i.e. the dimensions of the generated isogeometric spline spaces~$\mathcal{W}^{\sm}$ in 
  Section~\ref{sec:examples} for a mesh size~$h=\frac{1}{2^L}$, $L=0,1,\ldots,5$, for the two bilinearly parameterized multi-patch 
domains shown 
  in Fig.~\ref{fig:examples} (left column). In case of 
  the three-patch domain, the number in the brackets represents the dimension when using the alternative interpolation strategy for the 
  construction of the vertex subspaces instead of the minimal determining set approach.}
  \label{tab:examples}
\end{table}

%%%%%%%%%%%%%%%%%%%%%%%%%%%%%%%%%%%%%%%%%%%%%%%%%%%%%%%%%%%%%%%%%%%%%
\section{Conclusion} \label{sec:conclusion}
%%%%%%%%%%%%%%%%%%%%%%%%%%%%%%%%%%%%%%%%%%%%%%%%%%%%%%%%%%%%%%%%%%%%%

We have studied the space of $C^{\sm}$-smooth ($\sm \geq 1$) isogeometric spline functions on planar, bilinearly parameterized multi-patch 
domains and have presented the construction of  a particular subspace of this $C^{\sm}$-smooth isogeometric spline space. The use of the 
$C^{\sm}$-smooth subspace is advantageous compared to the use of the entire $C^{\sm}$-smooth space, since the design of the subspace
is simple and works uniformly for all possible multi-patch configurations, and furthermore, the numerical experiments by performing 
$L^2$ approximation indicate that the subspace already possesses optimal approximation properties.

The construction of the $C^{\sm}$-smooth subspace and of an associated simple and locally supported basis is firstly described 
for the case of two-patch domains, and is then extended to the case of multi-patch domains with more than two patches and with possibly 
extraordinary vertices. In the latter case, the $C^{\sm}$-smooth subspace is generated as the direct sum of spaces corresponding to the 
individual patches, edges and vertices. 

Moreover, a possible generalization of our approach to a more general class of planar multi-patch parameterizations, called bilinear-like 
$G^{\sm}$ multi-patch geometries, is briefly explained. This class of multi-patch parameterizations provides the possibility to model multi-patch 
domains with curved interfaces and boundaries. A detailed study of this class of geometries is beyond the scope of the paper and is a topic of 
our future research. Further open problems which are worth to study are e.g. the theoretical investigation of the approximation 
properties of the constructed $C^{\sm}$-smooth isogeometric spline space, the use of the $C^{\sm}$-smooth isogeometric spline functions for 
applications which require functions of high continuity such as solving fourth order PDEs via isogeometric collocation, and the extension of our 
approch to multi-patch shells and volumes.

\begin{acknowledgements}
M. Kapl has been partially supported by the Austrian Science Fund (FWF) through the project P~33023.
V.~Vitrih has been partially supported by the research program P1-0404 and research projects J1-9186, J1-1715
 from ARRS, Republic of Slovenia.
This support is gratefully acknowledged. 
\end{acknowledgements}

% Authors must disclose all relationships or interests that 
% could have direct or potential influence or impart bias on 
% the work: 
%
 \section*{Conflict of interest}
 The authors declare that they have no conflict of interest.

% BibTeX users please use one of
%\bibliographystyle{spbasic}      % basic style, author-year citations
\bibliographystyle{spmpsci}      % mathematics and physical sciences
%\bibliographystyle{spphys}       % APS-like style for physics
%\bibliography{}   % name your BibTeX data base

\end{document}